\numberwithin{equation}{section}
\def\a{\alpha}
\def\B{{\mathbb{B}}}
\def\C{\mathbb{C}}
\def\R{\mathbb{R}}
\def\N{\mathbb{N}}
\def\S{{\mathbb{S}}}
\def\M{{\mathbb{M}}}
\def\H{{\mathbb{H}}}
\newtheorem{thm}{Theorem}[section]
\newtheorem{lemma}[thm]{Lemma}
\newtheorem{rem}[thm]{Remark}
\newtheorem{prop}[thm]{Proposition}
\newtheorem*{thA}{Theorem A}
\newtheorem*{thB}{Theorem B}
\newtheorem*{thC}{Theorem C}
\begin{document}
	
	\title[$L^p-L^q$ ESTIMATES]{$L^p-L^q$ ESTIMATES OF BERGMAN PROJECTOR  ON THE MINIMAL BALL}
		\author{Jocelyn Gonessa}
	\address{Current address: Universit\'e de Bangui, D\'epartement 
		de math\'ematiques et Informatique, BP.908 Bangui-R\'epublique Centrafricaine}
	\email{gonessa.jocelyn@gmail.com}
	\email{jocelyn@aims.ac.za}
	\subjclass[2000]{Primary  47B35, 32A36, 30H25, 30H30, 46B70, 46M35}
	
	\keywords{Bergman spaces, Bergman projection, kernel}
	\thanks{Gonessa was supported by \textit{International Centre of Theoretical Physics in Italia}}
	
	\begin{abstract}
		We study the $L^p-L^q$ boundedness of Bergman projector on the minimal ball. This improves an important result of \cite{MY} due to G. Mengotti and E. H. Youssfi .		
			\end{abstract}
	
	\maketitle
	
	\section{Introduction}
	G. Mengotti and E. H. Youssfi studied in \cite{MY} the $L^p-$ boundedness of Bergman projector on the minimal ball. Here we improve their result by giving the $L^p-L^q$ boundedness of Bergman projector. \textit{The minimal ball} $\B_*$ in $\C^n$ is defined as follows.
	$$\B_*=\{z\in\C^n:|z|^2+|z\bullet z|<1\}$$
	where $z\bullet w=\overset{n}{\underset{j=1}{\sum}} z_j w_j$ for $z$ and $w$ in $\C^n$. This is the unit ball of $\C^n$ with respect to the norm $N_*(z):=\sqrt{|z|^2+|z\bullet z|}$. The norm $N:=N_*/\sqrt2$ was introduced by Hahn and Pflug in \cite{HP}, where it was 
	shown to be the smallest norm in $\C^n$ that extends the euclidean norm in $\R^n$. More precisely, if $N$ is any complex norm in $\C^n$ such that 
	$N(x)=|x|=\sqrt{\sum_{j=1}^n x_j^2}$ for $x\in\R^n$ and $N(z)\leq |z|$ for $z\in\C^n$, then $N_*(z)/\sqrt{2}\leq N(z)$
	for $z\in \C^n$. Moreover, this
	norm was shown to be of interest in the study of several problems related to
	proper holomorphic mappings and the Bergman kernel, see for example \cite{G,GY,MY, M1}. The domain $\B_*$ is the first bounded domain in $\C^n$ which is neither Reinhardt nor homogeneous, and for which we have an explicit formula for its Bergman kernel.	The study of $L^p-L^q$ estimates of Bergman projector on smooth homogeneous is rather well understood on the unit ball and Siegel domains, see for example \cite{RZ}, \cite{Z3}, \cite{NS}, etc. 
		
	The authors of \cite{MY} developped a method for $L^p-$boundedness of Bergman projector on the minimal ball. Their argue consists to study the boundedness on an auxiliary complex manifold $\M$. Next, to transfer the results obtained on $\M$ to $\B_*$ via a proper holomorphic mapping. Our strategy combine the method of \cite{MY}, \cite{RZ} and a new ingredient.
	
	The plane of our research is the following. We first study the boundedness of certain class of integral operators on $\M$ by using the generalized Schur's test (see \cite{RZ}) and the Forelli-Ruding estimates (see \cite{MY}). As consequence we obtain the Bergman projector estimate in $\M$. Second we transplant the results obtained on $\M$ to Bergman projector on the minimal ball.
	
	\section{Preleminaries}
	We first define the auxiliary complex manifold $\M$. Let $ n\geq 2$ and consider the nonsingular cone
	$$\H:=\{z\in\C^{n+1}: \ z_1^2+\cdots+z_{n+1}^2=0,\,\,z\neq 0\}.$$
	This  is the orbit of the vector $(1, i, 0,  \ldots, 0)$ under the $SO(n+1, \C)-$action on $\C^{n+1}.$ 
	It is well-known that $\H$ can be identified with  the cotangent bundle of the unit sphere $\S^n$ in 
	the $n-$dimensional sphere in $\R^{n+1}$ minus its zero section. It was proved in \cite{OPY} that there 
	is a unique (up to a multiplicative constant) $SO(n+1,\,\,\C)-$invariant holomorphic form $\alpha$ on $\H$. 
	The restriction of this form to $\H\cap (\C\backslash\{0\})^{n+1}$ is given by
	$$\alpha(z)=\sum_{j=1}^{n+1}\frac{(-1)^{j-1}}{z_j}dz_1\wedge\cdots\wedge
	\widehat{dz_j}\wedge\cdots\wedge dz_{n+1}.$$
	The complex manifold $\M$ is defined by
	$$\M=\{z\in\H:\,\,\,|z|<1\}$$
	The orthogonal group $O(n + 1,\R)$ acts transitively on the manifold $$\partial \M=\{z\in\C^{n+1}:\,\,z\bullet z=0\,\,\textrm{and}\,\,|z|=1\}$$ Thus there is a unique $O(n + 1,\R)-$invariant probability
	measure $\mu$ on $\M$. This measure is induced by the Haar probability
	measure of $O(n + 1,\R)$ (see \cite{MY}). For any $\C^\infty-$ function $f$ on $\H$ we have, from \cite[Lemma 2.1]{MY},  that 
	\begin{equation}\label{intMY}
	\int_\H f(z)\a(z)\wedge\overline{\a(z)}=m_n\int^\infty_0 t^{2n-3}\int_{\partial\M}f(t\xi)d\mu(\xi)dt
	\end{equation}
	provided that the integrals make sense. Moreover
	$$m_n=2(n-1)\int_{z\in\M} \a(z)\wedge\overline{\a(z)}.$$
	For all $0<p<\infty$ we consider Lebesgue space $L_s^p(\M)$ on the measure space $(\M,(1-|z|^2)^s\a(z)\wedge\overline{\a(z)})$. The Bergman space $A_s^p(\M)$ is the subspace of $L_s^p(\M)$ consisting of holomorphic functions. $A_s^2(\M)$ is the closed subspace of the Hilbert space $L_s^2(\M)$. There exists a unique orthogonal projection $L_s^2(\M)$ onto $A_s^2(\M)$. That is the weighted Bergman projection. Its explicit expression is the following.
	$$P_{s,\M}f(z)=\int_{\M}K_{s,\M}(z,w)f(w)(1-|w|^2)^s\a(w)\wedge\overline{\a(w)}$$
	where the so called kernel Bergman $K_{s,\M}$ (see \cite[Theorem 3.2]{MY}) is given by 
	$$K_{s,\M}(z,w)=\frac{C\left(n-1+(n+1+2s)z\bullet\bar w\right)}{(1-z\bullet\bar w)^{n+1+s}}.$$
	Here $C$ is a certain constant that depends on $n$ and $s$. In this paper we consider the class of operators defined as follow.
	$$S_{\M}f(z)=(1-|z|^2)^{b_1}\int_{\M}
	\frac{f(w)}{(1-z\bullet\bar w)^c}(1-|w|^2)^{b_2}\a(w)\wedge\overline{\a(w)}$$ 
	and 
	$$T_{\M}f(z)=(1-|z|^2)^{b_1}\int_{\M}
	\frac{f(w)}{|1-z\bullet\bar w|^c}(1-|w|^2)^{b_2}\a(w)\wedge\overline{\a(w)}$$ 
	where $b_1$, $b_2$ and $c$ are any real numbers. 
	
	\section{ Statement of the auxiliaries results}
	The following auxiliaries results will play a key role for proving the main result of the paper.
	
	\begin{thA}\label{mainthA}{\it
			Let $b_1$, $b_2$ and $c$ be real numbers. Let $1<p\leq q<\infty$, $\max(-1,-1-qb_1)<r<\infty$ and $-1<s<\infty$. 
			Then the following assertions are equivalent.
			\begin{enumerate}
				\item[(i)] The operator $T_{\M}$ is bounded from $L_s^p(\M)$ to $L_r^q(\M)$.
				\item[(ii)] The operator $S_{\M}$ is bounded from $L_s^p(\M)$ to $L_r^q(\M)$.
				\item[(iii)] The parameters satisfy\\
				
				$
				\left\lbrace
				\begin{array}{ll}
				s+1<p(b_2+1)\\
				c\leq b_1+b_2-s+\frac{n+1+r}{q}+\frac{n+1+s}{p'}
				\end{array}
				\right.
				$
				
			\end{enumerate}
		}
	\end{thA}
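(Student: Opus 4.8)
The plan is to prove the theorem through the cycle of implications $(iii)\Rightarrow(i)\Rightarrow(ii)\Rightarrow(iii)$. The implication $(i)\Rightarrow(ii)$ is immediate: since $c$ is real, $|(1-z\bullet\bar w)^{c}|=|1-z\bullet\bar w|^{c}$, whence $|S_{\M}f(z)|\le T_{\M}(|f|)(z)$ pointwise, and boundedness of $T_{\M}$ forces that of $S_{\M}$. The content of the theorem is therefore the sufficiency $(iii)\Rightarrow(i)$ and the necessity $(ii)\Rightarrow(iii)$. For both directions I first absorb the weights into the functions: putting $g(w)=f(w)(1-|w|^{2})^{s/p}$ and pairing the target norm with the factor $(1-|z|^{2})^{r/q}$, the boundedness of $T_{\M}\colon L^{p}_{s}(\M)\to L^{q}_{r}(\M)$ becomes the boundedness $L^{p}(\a\wedge\overline{\a})\to L^{q}(\a\wedge\overline{\a})$ of the positive integral operator with kernel
$$K(z,w)=(1-|z|^{2})^{b_1+\frac rq}\,\frac{(1-|w|^{2})^{b_2-\frac sp}}{|1-z\bullet\bar w|^{c}}.$$

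For the sufficiency I would apply the generalized Schur test of \cite{RZ} to $K$, testing against the one–parameter family $(1-|w|^{2})^{-\sigma}$. The two Schur inequalities reduce, via the Forelli–Rudin estimate of \cite{MY},
$$\int_{\M}\frac{(1-|w|^{2})^{\tau}}{|1-z\bullet\bar w|^{\kappa}}\,\a(w)\wedge\overline{\a(w)}\asymp(1-|z|^{2})^{\,n+1+\tau-\kappa},\qquad \kappa>n+1+\tau>0,$$
to elementary comparisons of powers of $1-|z|^{2}$. The admissibility of the test exponents (the weight exponents they generate must exceed $-1$) reflects the first line $s+1<p(b_2+1)$ of $(iii)$, while matching the two resulting powers of $(1-|z|^{2})$ produces exactly $c\le b_1+b_2-s+\frac{n+1+r}{q}+\frac{n+1+s}{p'}$. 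I expect this to be the main obstacle. The classical two–function Schur test, adequate when $p=q$, is \emph{false} for $p<q$: it would only yield the weaker inequality $c\le b_1+b_2+(n+1)-\tfrac sp+\tfrac rq$, over-permitting by $(n+1)(\tfrac1p-\tfrac1q)$ and thus contradicting the necessity below. One must instead invoke the genuine $L^{p}$–$L^{q}$ Schur test, whose additional $L^{t}$–balancing with $\tfrac1t=\tfrac1p-\tfrac1q$ restores the missing $(n+1)/t$; tracking the three exponents $(p',q,t)$ simultaneously through the Forelli–Rudin estimates is the delicate computation.

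For the necessity $(ii)\Rightarrow(iii)$ I would argue by testing. Well–definedness of $S_{\M}$ on every $f\in L^{p}_{s}(\M)$ forces $w\mapsto(1-|w|^{2})^{b_2-s/p}|1-z\bullet\bar w|^{-c}$ into $L^{p'}(\a\wedge\overline{\a})$ for interior $z$; as $|1-z\bullet\bar w|$ is then bounded below, this is convergence of $\int_{\M}(1-|w|^{2})^{(b_2-s/p)p'}\a\wedge\overline{\a}$, i.e.\ $(b_2-\tfrac sp)p'>-1$, which is precisely $s+1<p(b_2+1)$. For the second condition I would use the boundary–concentrated family $f_{a}(w)=(1-|a|^{2})^{\gamma}(1-w\bullet\bar a)^{-\delta}$, $a\in\M$, for which Forelli–Rudin gives $\|f_{a}\|_{L^{p}_{s}}\asymp(1-|a|^{2})^{\gamma+(n+1+s)/p-\delta}$. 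On the Bergman ball $B(a)$, where $(1-|z|^{2})\asymp(1-|a|^{2})$ and $\int_{B(a)}\a\wedge\overline{\a}\asymp(1-|a|^{2})^{n+1}$, the dominant contribution to $S_{\M}f_{a}(z)$ comes from $w\in B(a)$; there $1-w\bullet\bar a=\overline{1-a\bullet\bar w}$ aligns the kernel phases and suppresses cancellation, giving the lower bound $|S_{\M}f_{a}(z)|\gtrsim(1-|a|^{2})^{b_1+\gamma+n+1+b_2-c-\delta}$ and hence $\|S_{\M}f_{a}\|_{L^{q}_{r}}\gtrsim(1-|a|^{2})^{b_1+\gamma+n+1+b_2-c-\delta+(n+1+r)/q}$. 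Feeding these two estimates into the boundedness inequality and letting $a\to\partial\M$ makes the free exponents $\gamma,\delta$ cancel and leaves exactly $c\le b_1+b_2-s+\frac{n+1+r}{q}+\frac{n+1+s}{p'}$. The one point requiring care is the suppression of cancellation off the exact diagonal, controlled by the standard observation that $\arg(1-z\bullet\bar w)$ is essentially constant as $z,w$ range over $B(a)$.
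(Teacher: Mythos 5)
Your plan is sound, and on the sufficiency side it is essentially the paper's own proof: Lemma \ref{lemm1} does exactly what you describe --- reduce to the positive kernel, apply Zhao's $L^p$--$L^q$ Schur test (Theorem \ref{Schur1}, imported from \cite{RZ}) with power test functions, and evaluate both Schur integrals by the Forelli--Rudin estimates of Lemma \ref{Jlemma}; your observation that the classical $p=q$ Schur test cannot produce the sharp condition is precisely why \cite{RZ} is invoked. One bookkeeping caution: your ``one-parameter family'' $(1-|w|^2)^{-\sigma}$ must in fact carry two independent exponents (the paper's $u$ and $v$), in addition to the splitting exponents $\gamma+\delta=1$; with a single $\sigma$ the two Schur inequalities cannot in general be balanced when $p<q$.

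Where you genuinely diverge from the paper is the necessity, and your route is legitimate but different. The paper (Lemmas \ref{lemm3} and \ref{lemm4}) obtains the first condition by duality, computing the adjoint on the radial functions $f_N(z)=(1-|z|^2)^N$ and requiring $T_\M^*f_N=C_N(1-|z|^2)^{b_2-s}\in L^{p'}_s(\M)$, and the second condition by testing on kernel functions $f_\xi\sim K_{b_2,\M}(\cdot,\xi)$, whose image under the operator is computed \emph{exactly} via the reproducing property and then estimated by Lemma \ref{Jlemma}. You instead use a converse-H\"older/well-definedness argument at an interior point (which does yield $s+1<p(b_2+1)$; the computation checks), and power test functions $(1-w\bullet\bar a)^{-\delta}$ with a Bergman-ball localization and a phase-alignment lower bound; your exponent arithmetic is also correct, since $b_1+b_2+(n+1)+\frac{n+1+r}{q}-\frac{n+1+s}{p}$ coincides with the stated bound because $\frac{n+1+s}{p'}=(n+1+s)-\frac{n+1+s}{p}$. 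Two remarks on the trade-off. First, your cycle (iii)$\Rightarrow$(i)$\Rightarrow$(ii)$\Rightarrow$(iii) is logically cleaner than the printed proof of Theorem A, which declares ``(ii) implies (i)'' obvious, when the obvious implication is (i)$\Rightarrow$(ii); the paper's argument closes only because its necessity lemmas, though phrased for $T_\M$, actually manipulate the holomorphic kernel (see the displayed $T_\M^*$ and the reproducing-property step), i.e.\ they really establish (ii)$\Rightarrow$(iii), which is exactly what you prove directly. Second, the cost of your route: the localization facts you invoke --- $\int_{B(a)}\a(z)\wedge\overline{\a(z)}\asymp(1-|a|^2)^{n+1}$, $1-|z|^2\asymp1-|a|^2$ on $B(a)$, and smallness of $\arg(1-z\bullet\bar w)$ for $z,w\in B(a)$ --- are standard on the unit ball but are not off-the-shelf for the manifold $\M$; they would have to be derived, e.g.\ from the polar-coordinate formula (\ref{intMY}) and the cap estimates implicit in Lemma \ref{Jlemma}. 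The reproducing-property computation is precisely the paper's device for avoiding that geometry, so if you prefer not to develop it, replace your $f_a$ by the paper's $f_\xi$ and your lower bound becomes an exact identity.
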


	\begin{thB}\label{mainthB}{\it
			Let $b_1$, $b_2$ and $c$ be real numbers. Let $1=p\leq q<\infty$, $\max(-1,-1-qb_1)<r<\infty$ and $-1<s<\infty$. 
			Then the following assertions are equivalent.
			\begin{enumerate}
				\item[(i)] The operator $T_{\M}$ is bounded from $L_s^1(\M)$ to $L_r^q(\M)$.
				\item[(ii)] The operator $S_{\M}$ is bounded from $L_s^1(\M)$ to $L_r^q(\M)$.
				\item[(iii)] The parameters satisfy\\
				
				$
				\left\lbrace
				\begin{array}{ll}
				s<b_2\\
				c= b_1+b_2-s+\frac{n+1+r}{q}
				\end{array}
				\right.
				$
				or
				$			
				\left\lbrace
				\begin{array}{ll}
				s\leq b_2\\
				c<b_1+b_2-s+\frac{n+1+r}{q}
				\end{array}
				\right.
				$
			\end{enumerate}
		}
	\end{thB}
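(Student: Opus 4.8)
The plan is to establish the circle of implications $(iii)\Rightarrow(i)\Rightarrow(ii)\Rightarrow(iii)$, exploiting the fact that the endpoint $p=1$ admits a clean characterization that bypasses the generalized Schur test of Theorem A. First I would rewrite both operators as integral operators against the measure $d\nu(w)=(1-|w|^2)^s\a(w)\wedge\overline{\a(w)}$ defining $L_s^1(\M)$, with target measure $d\mu(z)=(1-|z|^2)^r\a(z)\wedge\overline{\a(z)}$. Absorbing the weight, $T_\M f(z)=\int_\M G(z,w)f(w)\,d\nu(w)$ with positive kernel
\[
G(z,w)=\frac{(1-|z|^2)^{b_1}(1-|w|^2)^{b_2-s}}{|1-z\bullet\bar w|^{c}},
\]
and $S_\M$ has the same kernel with $(1-z\bullet\bar w)^{c}$ in place of $|1-z\bullet\bar w|^{c}$.

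The heart of the matter is the equivalence $(i)\Leftrightarrow(iii)$. Since $G\ge0$ and $q\ge1$, Minkowski's integral inequality gives $\|T_\M f\|_{L_r^q}\le\big(\sup_{w}\|G(\cdot,w)\|_{L^q(\mu)}\big)\|f\|_{L_s^1}$, while testing on approximations of point masses (trivial here because the kernel is positive) shows this supremum is also a lower bound for the operator norm; hence $T_\M$ is bounded from $L_s^1(\M)$ to $L_r^q(\M)$ iff $M:=\sup_{w\in\M}\|G(\cdot,w)\|_{L^q(\mu)}<\infty$. A direct computation gives
\[
\|G(\cdot,w)\|_{L^q(\mu)}^q=(1-|w|^2)^{(b_2-s)q}\int_\M\frac{(1-|z|^2)^{b_1q+r}}{|1-z\bullet\bar w|^{cq}}\,\a(z)\wedge\overline{\a(z)}.
\]
The hypothesis $r>\max(-1,-1-qb_1)$ ensures $b_1q+r>-1$, so the Forelli-Rudin estimates of \cite{MY} apply to the inner integral, which is comparable to $1$, to $\log\frac{1}{1-|w|^2}$, or to $(1-|w|^2)^{\,n+1+b_1q+r-cq}$ according as $cq<$, $=$, or $>n+1+b_1q+r$. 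A short case analysis as $|w|\to1$ then shows $M<\infty$ precisely when the parameters satisfy $(iii)$, the logarithmic borderline $cq=n+1+b_1q+r$ being exactly what forces the strict/non-strict dichotomy between $s<b_2$ and $s\le b_2$.

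The remaining implications frame this computation. The implication $(i)\Rightarrow(ii)$ is immediate from the pointwise bound $|S_\M f(z)|\le T_\M(|f|)(z)$, since $|(1-z\bullet\bar w)^{-c}|=|1-z\bullet\bar w|^{-c}$. For $(ii)\Rightarrow(iii)$ I would test the boundedness of $S_\M$ on the normalized bumps $f_a=\chi_{E_a}/\nu(E_a)$, where $E_a$ is a Bergman metric ball of small fixed radius centered at $a\in\M$, so that $\|f_a\|_{L_s^1}=1$. On $E_a$ one has $(1-|w|^2)\asymp(1-|a|^2)$ and $\nu(E_a)\asymp(1-|a|^2)^{s}\,\mathrm{vol}(E_a)$, so the weight factors cancel and $S_\M f_a(z)$ becomes comparable in modulus to $G(z,a)$; boundedness then yields $\sup_a\|G(\cdot,a)\|_{L^q(\mu)}=M<\infty$, hence $(iii)$ by the case analysis of the preceding paragraph.

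The step I expect to be the main obstacle is precisely this last one: because the kernel of $S_\M$ is the holomorphic $(1-z\bullet\bar w)^{-c}$ rather than its modulus, one must rule out cancellation when integrating over $E_a$. This requires showing that, for a ball of sufficiently small radius, the argument of $1-z\bullet\bar w$ varies by less than a fixed angle as $w$ ranges over $E_a$, so that $|\int_{E_a}(1-z\bullet\bar w)^{-c}\,d\nu|\gtrsim\int_{E_a}|1-z\bullet\bar w|^{-c}\,d\nu$; this in turn rests on the comparability $|1-z\bullet\bar w|\asymp|1-z\bullet\bar a|$ on Bergman balls, which I would extract from the geometry of $\M$ and the estimates underlying the Forelli-Rudin bounds in \cite{MY}. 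Modulo this lemma the endpoint $p=1$ is genuinely simpler than Theorem A, since neither the Schur test nor duality in both variables is needed: Minkowski's inequality for sufficiency and point-mass testing for necessity suffice.
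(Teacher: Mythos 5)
Your treatment of the positive-kernel operator is correct and genuinely simpler than the paper's. For $p=1$ the generalized Schur test is indeed unnecessary: Minkowski's integral inequality plus Fatou's lemma applied to shrinking normalized bumps (legitimate here, since the kernel $G$ is positive and continuous on $\M\times\M$) characterizes boundedness of $T_\M\colon L^1_s(\M)\to L^q_r(\M)$ by $M=\sup_{w\in\M}\|G(\cdot,w)\|_{L^q_r(\M)}<\infty$, and your Forelli--Rudin case analysis of $M$ via Lemma \ref{Jlemma} (the hypothesis on $r$ giving $qb_1+r>-1$) does recover condition (iii) exactly, including the role of the logarithmic borderline in the $s<b_2$ versus $s\le b_2$ dichotomy. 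This replaces the paper's sufficiency argument, which runs Theorem \ref{Schur2} with auxiliary power functions in Lemmas \ref{lemm2} and \ref{lemm2.2}, and it gives you (i)$\Leftrightarrow$(iii) and (i)$\Rightarrow$(ii) with far less machinery. (Your trivial implication also points in the right direction, $|S_\M f|\le T_\M(|f|)$ giving (i)$\Rightarrow$(ii); the paper's assertion that ``(ii) implies (i) is obvious'' is backwards as stated, and its theorem survives only because its Schur-test lemmas actually bound the dominating positive operator.)

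The genuine gap is the step you flagged yourself: (ii)$\Rightarrow$(iii). Your bump test applied to $S_\M$ needs an anti-cancellation lemma --- that on a ``Bergman ball'' $E_a\subset\M$ of small fixed radius the argument of $1-z\bullet\bar w$ varies by less than a fixed angle uniformly in $z\in\M$ and in the center $a$, together with $(1-|w|^2)\asymp(1-|a|^2)$ and $|1-z\bullet\bar w|\asymp|1-z\bullet\bar a|$ on $E_a$. Nothing of this kind is available in the paper or in \cite{MY}: $\M$ is an $n$-dimensional complex submanifold of $\C^{n+1}$, no Bergman metric or ball geometry on it is developed anywhere, and even the ambient unit-ball version of the argument-stability estimate (ratio tending to $1$ with the radius, uniformly up to the boundary) is sharper than the standard comparability lemmas you could hope to quote; so this unproved lemma carries the entire content of the implication. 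The paper sidesteps the issue by exploiting holomorphy rather than geometry: in Lemma \ref{lemm4} it tests on the functions $f_\xi$, which are normalized (conjugates of) reproducing kernels of $A^2_{b_2}(\M)$, so that the reproducing property evaluates the integral defining $S_\M f_\xi$ in closed form --- for $p=1$ a constant times $(1-|z|^2)^{b_1}(1-|\xi|^2)^{n+1+b_2}(1-z\bullet\bar\xi)^{-c}$ --- leaving no room for cancellation; the first half of (iii) ($s<b_2$, resp. $s\le b_2$) is then obtained by the separate duality computation of Lemma \ref{lemm3}, applying the adjoint operator to $(1-|z|^2)^N$. Since $\|f_\xi\|_{L^1_s}\lesssim 1$, both of these arguments use only boundedness applied to explicit test functions and hence run verbatim under hypothesis (ii). If you substitute these holomorphic test functions for your bumps, your necessity step closes with tools already in the paper; with the bumps, it does not.
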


	\section{Statement of the main result}
	To state the main result we need the following definitions. For any $s>-1$ we define the weighted measure $v_s$ on $\B_*$ by $dv_s(z)=(1-N^2_*(z))^sdv(z)$ where $v$ is the normalized Lebesgue measure on $\B_*$. For all $0<p<\infty$ we consider Lebesgue space $L_s^p(\B_*)$ on the measure space $(\B_*,|z\bullet z|^\frac{p-2}{2}dv_s)$. The Bergman space $A_s^p(\B_*)$ is the subspace of $L_s^p(\B_*)$ consisting of holomorphic functions. It is well-known for $p=2$ there exists a unique orthogonal projection from $L_s^2(\B_*)$ onto $A_s^2(\B_*)$. That is so called weighted Bergman projection and denoted $P_{s,\B_*}$. Also, it is well-known that $P_{s,\B_*}$ is an integral operator on $L_s^2(\B_*)$. More precisely
	$$P_{s,\B_*}f(z)=\int_{\B_*}K_{s,\B_*}(z,w)f(w)dv_s(w)$$
	and the so called Bergman kernel $K_{s,\B_*}$ is explicitly given in \cite[Theorem A]{MY} by
	$$K_{s,\B_*}(z,w)=\frac{1}{(n^2+n-s)v_s(\B_*)}\frac{A(1-z\bullet\bar w,z\bullet z\overline{w\bullet w})}{\left((1-z\bullet w)^2-z\bullet z\overline{w\bullet w}\right)^{n+1+s}}$$ 
	where
	\begin{eqnarray*}
		A(X,Y)&=&\sum_{k=0}^{\infty}
		\left(
		\begin{array}{c}
			n+s+1\\
			2k+1
		\end{array}
		\right)
		X^{n+s-2k-1}Y^k\\
		&&\times \left[2(n+s)-\frac{(n+1+2s)(n+s-2k)}{(n+s+1)}(X^2-Y)\right]
	\end{eqnarray*}
	with $$\left(
	\begin{array}{c}
	n+s+1\\
	2k+1
	\end{array}
	\right)=\frac{(n+s+1)(n+s)\cdots(n+s-2k+1)}{(2k+1)!}$$

	The main result of the paper is the following. 
	
	\begin{thC}\label{mainthC}{\it
			Let $1\leq p\leq q <\infty$, $-1<\lambda,\tilde{\lambda}<\infty$. 
			\begin{enumerate}
				\item[(i)] For $1< p\leq q <\infty$ the Bergman projector $P_{s,\B_*}$ is bounded from $L^p_{\lambda}(\B_*)$ into $A_{\tilde{\lambda}}^q(\B_*)$ if and only if  $
				\left\lbrace
				\begin{array}{ll}
				\lambda+1<p(s+1)\\
				s\geq \frac{n+1+\lambda}{p}- \frac{n+1+\tilde{\lambda}}{q}
				\end{array}
				\right.
				$ 
				\item[(ii)] For $1=p\leq q <\infty$ the Bergman projector $P_{s,\B_*}$ is bounded from $L_{\lambda}^1(\B_*)$ into $A_{\tilde{\lambda}}^q(\B_*)$ if and only if 
				
				$
				\left\lbrace
				\begin{array}{ll}
				\lambda<s\\
				\frac{n+1+\tilde{\lambda}}{q}\geq n+1+\lambda
				\end{array}
				\right.
				$
				or
				$			
				\left\lbrace
				\begin{array}{ll}
				\lambda\leq s\\
				\frac{n+1+\tilde{\lambda}}{q}>n+1+\lambda
				\end{array}
				\right.
				$
				
			\end{enumerate}

		}
	\end{thC}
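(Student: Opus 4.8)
The plan is to transport the whole problem to the auxiliary manifold $\M$ through its canonical two–sheeted covering of $\B_*$ and then invoke Theorems A and B. Concretely, I would use the proper holomorphic map $\pi\colon\M\to\B_*$, $\pi(z_1,\dots,z_{n+1})=(z_1,\dots,z_n)$, whose two sheets are distinguished by $z_{n+1}=\pm\sqrt{-(z_1^2+\cdots+z_n^2)}$. Two elementary identities drive the reduction: $1-|z|^2=1-N_*^2(\pi z)$ and $|z_{n+1}|^2=|\pi z\bullet\pi z|$, together with the push-forward formula $\a(z)\wedge\overline{\a(z)}=c_n\,dv(\pi z)/|\pi z\bullet\pi z|$ extracted from \eqref{intMY}. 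First I would introduce the multiplier map $UF:=z_{n+1}\,(F\circ\pi)$ and check, by a direct change of variables, that $U$ sends $L^p_{\lambda}(\B_*)$ isometrically onto the anti-invariant (odd in $z_{n+1}$) subspace of $L^p_{\lambda}(\M)$, for every exponent. The weight $|z\bullet z|^{(p-2)/2}$ is exactly what absorbs the factor $|z_{n+1}|^p=|\pi z\bullet\pi z|^{p/2}$ against the Jacobian $|\pi z\bullet\pi z|^{-1}$; this bookkeeping is the new ingredient absent from the purely $L^p$ treatment of \cite{MY}. The same $U$ carries $A^q_{\tilde\lambda}(\B_*)$ onto the odd part of $A^q_{\tilde\lambda}(\M)$.

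Second, I would establish the intertwining $U P_{s,\B_*}=P_{s,\M}U$. At the Hilbert-space level $(p=q=2)$ this is automatic: $U$ is a surjective isometry of the $L^2_s$-spaces sending $A^2_s(\B_*)$ onto the odd part of $A^2_s(\M)$, so it intertwines the two orthogonal projections, and $P_{s,\M}$ preserves the odd subspace because the sheet-exchange $\sigma(z)=(z_1,\dots,z_n,-z_{n+1})$ is a measure-preserving holomorphic automorphism of $\M$ (indeed $\sigma^*\a=-\a$, hence $\sigma^*(\a\wedge\overline{\a})=\a\wedge\overline{\a}$). The computational core is the kernel identity behind this: the denominator $(1-z\bullet\bar w)^2-z\bullet z\,\overline{w\bullet w}$ of $K_{s,\B_*}$ factors as $(1-z\bullet\bar w)(1-z\bullet\overline{\sigma w})$ into the two $\M$-type factors attached to the two preimages, while the polynomial $A$ is engineered so that $\tfrac{z_{n+1}\overline{w_{n+1}}}{2c_n}K_{s,\B_*}(\pi z,\pi w)$ is precisely the odd-in-$w$ part $\tfrac12\bigl(K_{s,\M}(z,w)-K_{s,\M}(z,\sigma w)\bigr)$ of the Bergman kernel of $\M$. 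I would verify this factorisation directly from the explicit formulas and thereby extend the intertwining to all $p,q$.

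These two steps reduce the boundedness of $P_{s,\B_*}\colon L^p_{\lambda}(\B_*)\to A^q_{\tilde\lambda}(\B_*)$ to the boundedness of $P_{s,\M}$ from the odd part of $L^p_{\lambda}(\M)$ into the odd part of $A^q_{\tilde\lambda}(\M)$. For sufficiency I would dominate the kernel pointwise, $|K_{s,\M}(z,w)|\lesssim|1-z\bullet\bar w|^{-(n+1+s)}$, so that $P_{s,\M}$ is controlled by the operator $T_{\M}$ with $b_1=0$, $b_2=s$, $c=n+1+s$; Theorem A (for $1<p\le q$) and Theorem B (for $p=1$) then supply boundedness on the full space, a fortiori on the anti-invariant subspace. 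For necessity I would run the extremal test-function arguments of Theorems A and B, choosing the competitors anti-invariant so that they descend through $U$ to genuine functions on $\B_*$. Substituting $b_1=0$, $b_2=s$, $c=n+1+s$, domain weight $\lambda$ and target weight $\tilde\lambda$ into condition (iii) of each theorem and simplifying with $1/p'=1-1/p$, the inequality $\lambda+1<p(s+1)$ survives verbatim, and I expect the exponent inequality to reorganise into the bound relating $s$, $\tfrac{n+1+\lambda}{p}$ and $\tfrac{n+1+\tilde\lambda}{q}$ of Theorem C(i), with the two alternatives of Theorem B(iii) producing the two cases of Theorem C(ii).

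The main obstacle, and the step demanding the most care, is to confirm that restricting to the anti-invariant subspace does not alter the characterisation. Because odd functions vanish along the branch divisor $\{z\bullet z=0\}$, the signed kernel $z_{n+1}\overline{w_{n+1}}K_{s,\B_*}$ enjoys a cancellation between the $w$- and $\sigma w$-singularities that the crude domination by $T_{\M}$ discards; one must check that this cancellation neither relaxes nor tightens the admissible range of exponents, i.e.\ that odd-subspace boundedness is governed by the very inequalities of Theorems A and B, and that the sharpness examples can be realised anti-invariantly. Pinning down this equivalence between full-space and odd-subspace boundedness—so that Theorems A and B transfer without loss and the parameter dictionary closes exactly onto the statement of Theorem C—is where the delicate part of the argument lies.
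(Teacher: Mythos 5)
Your proposal is essentially the paper's own route: your multiplier $U$ is exactly the isometry $I_\M$ of Lemma~\ref{Isometry} (quoted from \cite[Lemma 4.1]{MY}) together with its intertwining property, and your substitution $b_1=0$, $b_2=s$, $c=n+1+s$ into Theorems A and B is precisely how the paper obtains Proposition~\ref{propo} (where $c$ is misprinted as $n+1+\lambda$), from which Theorem C is deduced. The anti-invariant-subspace subtlety you single out in the necessity direction is a genuine point, but the paper's proof does not treat it either (it consists of citing Lemma~\ref{Isometry} and Proposition~\ref{propo}), so on that step you are, if anything, more explicit than the paper.
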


To prove our results we need the following results.
\begin{lemma}\cite[Lemma 5.1]{MY}\label{Jlemma}
	Let $d\in\N$. For $z\in\M$, $c\in\R$, $s> -1$, define
	$$I_c(z)=\int_{\partial\M}\frac{|z\bullet\xi|^{2d}}{|1-z\bullet\bar\xi|^{n+c}}d\mu(\xi)$$
	and 
	$$J_{c,s}(z)=\int_{\partial\M}\frac{|z\bullet w|^{2d}}{|1-z\bullet\bar w|^{n+c+s+1}}(1-|w|^2)^s\alpha(w)\wedge\overline{\alpha(w)}$$
When $c<0$, then $I_c$ and $J_{c,s}$ are bounded in $\M$. When $c>0$ then $I_c(z)\eqsim (1-|z|^2)^{-c}\eqsim J_{c,s}(z)$. Finally, $I_0(z)\eqsim \log\frac{1}{1-|z|^2}\eqsim J_{0,s}(z)$
\end{lemma}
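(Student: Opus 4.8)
The estimates are of Forelli--Rudin type, and the plan is to reduce both $I_c$ and $J_{c,s}$ to a single one-dimensional integral whose behaviour near $|z|=1$ is elementary to analyze. The starting observation is that $I_c$ is radial: since $\mu$ is $O(n+1,\R)$-invariant and $O(n+1,\R)$ acts transitively on $\partial\M$, writing $z=t\eta$ with $t=|z|$ and $\eta\in\partial\M$, and then $\eta=R\eta_0$ for a fixed base point $\eta_0=\frac{1}{\sqrt2}(1,i,0,\dots,0)$ and some $R\in O(n+1,\R)$, the substitution $\xi\mapsto R^{-1}\xi$ gives $I_c(z)=I_c(t\eta_0)$. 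Hence everything is controlled by the joint distribution, under $\mu$, of the conjugate-linear form $\eta_0\bullet\bar\xi$ (which produces the singular denominator) together with the holomorphic form $\eta_0\bullet\xi$ (which produces the numerator) as $\xi$ ranges over $\partial\M$.

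First I would compute the push-forward of $\mu$ under $\xi\mapsto\eta_0\bullet\bar\xi$, obtaining an explicit density $\psi$ on the closed unit disc; the residual symmetry of the stabilizer of $\eta_0$ in $O(n+1,\R)$, together with the constraint $\xi\bullet\xi=0$, makes this density tractable. This turns $I_c(t\eta_0)$ into a two-real-dimensional integral of the schematic shape $\int \frac{|g(\zeta)|^{2d}}{|1-t\zeta|^{n+c}}\,\psi(\zeta)\,dA(\zeta)$ over the disc, where $g$ records the value of $\eta_0\bullet\xi$. The singularity as $t\to1$ is concentrated near $\zeta=1$, and the three regimes of the conclusion correspond exactly to whether the resulting one-variable integral $\int_0^1(1-r)^{\#}(1-tr)^{-(n+c)}\,dr$ converges ($c<0$), diverges logarithmically ($c=0$), or produces the power $(1-t^2)^{-c}$ ($c>0$).

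For $J_{c,s}$ I would use the integration formula \eqref{intMY} to integrate out the radial variable over $\M$: this expresses $J_{c,s}(z)$ as $m_n\int_0^1 t^{\,2n-3+2d}(1-t^2)^s\,\big(\int_{\partial\M}\frac{|z\bullet\xi|^{2d}}{|1-t\,z\bullet\bar\xi|^{n+c+s+1}}d\mu(\xi)\big)\,dt$, whose inner surface integral is again handled by the invariance reduction above but now with the larger exponent $n+c+s+1$. Combining the radial weight $(1-t^2)^s$ with the surface estimate and carrying out the final $t$-integration by the standard beta-integral asymptotics reproduces the same three regimes, so that $J_{c,s}\eqsim I_c$ on $\M$.

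The main obstacle is the interplay between numerator and denominator. The factor $|z\bullet\xi|^{2d}$ vanishes precisely at $\xi=\eta_0$ (where $\eta_0\bullet\eta_0=0$), which is exactly the point at which the denominator $|1-z\bullet\bar\xi|$ is smallest; a naive bound would therefore lose the sharp order of growth and make the estimate appear to depend on $d$. The point I must establish carefully is that the dominant contribution comes from an annular region around $\eta_0$ on which $|z\bullet\xi|^{2d}$ is bounded above and below by constants independent of $t$, so that the numerator contributes only a harmless bounded factor and the order $(1-|z|^2)^{-c}$ is genuinely independent of $d$. Verifying that the push-forward density does not concentrate at the exact zero of the numerator is the technical heart of the argument; once it is in place, the three cases follow from the elementary one-variable estimates described above.
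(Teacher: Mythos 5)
Your proposal founders on the step you yourself identify as its technical heart, namely the claim that the dominant contribution to $I_c$ comes from an annular region on which $|z\bullet\xi|^{2d}$ is bounded above \emph{and below} by constants. This is not merely delicate to verify; it is impossible, because on $\partial\M$ the numerator is pointwise dominated by the denominator. Take $z=t\eta_0$ with $\eta_0=\frac{1}{\sqrt2}(1,i,0,\dots,0)$ (no loss of generality, by $O(n+1,\R)$-invariance), and for $\xi\in\partial\M$ set $u=\eta_0\bullet\xi=\frac{1}{\sqrt2}(\xi_1+i\xi_2)$ and $v=\frac{1}{\sqrt2}(\xi_1-i\xi_2)$, so that $\eta_0\bullet\bar\xi=\bar v$. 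The defining constraint $\xi\bullet\xi=0$ gives $2uv=-\sum_{j\geq 3}\xi_j^2$, hence
$$2|u|\,|v|\;\leq\;\sum_{j\geq 3}|\xi_j|^2\;=\;1-|u|^2-|v|^2\;\leq\;1-|v|^2,$$
and since $|1-t\bar v|\geq 1-|v|\geq\frac{1}{2}(1-|v|^2)$, one obtains (treating $|v|<\frac12$ separately, where the denominator exceeds $\frac12$)
$$|z\bullet\xi|\;\leq\;2\,|1-z\bullet\bar\xi|\qquad\text{for all }z\in\M,\ \xi\in\partial\M.$$
So wherever the denominator is small, the numerator is small of at least the same order; and on any region where $|z\bullet\xi|\gtrsim 1$, the denominator is $\gtrsim 1$ and the contribution is $O(1)$. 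The two properties required of your ``dominant annulus'' are mutually exclusive, so no push-forward computation can rescue the plan.

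Worse, the inequality shows that the statement is false as literally written, so your proposal is aimed at an unprovable target. Combining it with the (correct) case $d=0$ gives $I_c(z)\leq 4^d\int_{\partial\M}|1-z\bullet\bar\xi|^{-(n+c-2d)}d\mu(\xi)$, which is bounded when $c<2d$ and $O\bigl(\log\frac{1}{1-|z|^2}\bigr)$ when $c=2d$; thus for any $d\geq 1$ and $0<c\leq 2d$ the claimed two-sided growth $I_c(z)\eqsim(1-|z|^2)^{-c}$ (and it must be two-sided, since Lemma \ref{lemm4} uses it as a lower bound) cannot hold, and the same applies to $J_{c,s}$ because $|z\bullet w|\leq 2|1-z\bullet\bar w|$ for $z,w\in\M$ as well. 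The source of the trouble is a missing conjugation in the transcription: $z\bullet\xi$ is the bilinear pairing, which vanishes at the singular point precisely because $z\bullet z=0$ on $\M$, whereas in \cite[Lemma 5.1]{MY} the numerator carries the Hermitian pairing $|z\bullet\bar\xi|^{2d}$, which near the singular point $\xi\approx z/|z|$ equals approximately $|z|^{2d}$ and is therefore a harmless bounded factor for the trivial reason --- there is no numerator/denominator interplay to analyze at all. (Likewise, $J_{c,s}$ should be an integral over $\M$, not $\partial\M$.) Note also that the paper itself gives no proof of this lemma: it imports it from \cite{MY}, where the argument expands $|1-z\bullet\bar\xi|^{-(n+c)}$ in homogeneous polynomials and integrates term by term using the moment identities for $\mu$ --- a route quite different from yours. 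With the conjugate restored, your remaining skeleton (invariance reduction, push-forward to the disc, dyadic annuli, and the reduction of $J_{c,s}$ to $I_c$ via (\ref{intMY})) is viable; but as proposed, with the unconjugated numerator and the claimed lower bound on a dominant annulus, the argument has a fatal gap.
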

\begin{rem}
	The symbol $u(z)\eqsim v(z)$ means that $u(z)/v(z)$ has finite limit as $|z|$ tends to $1$.
\end{rem}

The following results are the boundedness criterions for integral operators from $L^p$ into $L^q$ called generalize Schur's test.

\begin{thm}\cite[Theorem 1]{RZ}\label{Schur1}
	Let $\nu_1$ and $\nu_2$ be postive measures on the space $\Omega$ and let $K(z,w)$ be a non-negative measurable function on $\Omega\times \Omega$. Let $T$ be the integral operator with kernel $K$, defined as follows.
	$$Tf(z)=\int_{\Omega}f(w)K(z,w)d\nu_1(w)$$ 
	Suppose $1<p\leq q<\infty$, $\frac{1}{p}+\frac{1}{p'}=1$ and suppose there exist $\gamma$ and $\delta$ such that $\gamma+\delta=1$. If there exist positive functions $h_1$ and $h_2$ with positive constants $C_1$ and $C_2$
	$$\int_{\Omega}h_1(w)^{p'}K(z,w)^{\gamma p'}d\nu_1(w)\leq C_1h_2(z)^{p'}$$ 
	for almost all $z\in\Omega$, and
	$$\int_{\Omega}h_2(z)^{q}K(z,w)^{\delta p'}d\nu_2(z)\leq C_1h_1(w)^{q}$$ 
	for almost all $w\in\Omega$, then $T$ is bounded from $L^p(\Omega,\nu_1)$ into $L^q(\Omega,\nu_2)$ and the norm of the operator does not exceed $C_1^{1/p'}C_2^{1/q}$.
	\end{thm}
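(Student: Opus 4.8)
The plan is to prove the boundedness directly, by first deriving a pointwise estimate for $|Tf(z)|$ and then integrating it against $\nu_2$; this is the engine behind every Schur-type test. The one feature that forces extra care is the off-diagonal range $p\le q$: where a single application of H\"older suffices when $p=q$, here I will also need Minkowski's integral inequality in order to exploit $q/p\ge1$.

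First I would estimate $Tf$ pointwise. Using $\gamma+\delta=1$, I factor $K(z,w)=K(z,w)^{\gamma}K(z,w)^{\delta}$ and regroup the integrand of $Tf$ as $\bigl(h_1(w)K(z,w)^{\gamma}\bigr)\cdot\bigl(h_1(w)^{-1}K(z,w)^{\delta}|f(w)|\bigr)$. H\"older's inequality with the conjugate pair $(p',p)$ then gives
$$|Tf(z)|\le\Bigl(\int_{\Omega}h_1(w)^{p'}K(z,w)^{\gamma p'}d\nu_1(w)\Bigr)^{1/p'}\Bigl(\int_{\Omega}h_1(w)^{-p}K(z,w)^{\delta p}|f(w)|^{p}d\nu_1(w)\Bigr)^{1/p}.$$
By hypothesis (i) the first factor is at most $C_1^{1/p'}h_2(z)$, so abbreviating $G(z):=\int_{\Omega}h_1(w)^{-p}K(z,w)^{\delta p}|f(w)|^{p}d\nu_1(w)$ I obtain $|Tf(z)|\le C_1^{1/p'}h_2(z)G(z)^{1/p}$.

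Next I would raise this to the power $q$ and integrate in $\nu_2$, which gives $\|Tf\|_{L^q(\nu_2)}^q\le C_1^{q/p'}\|h_2^{p}G\|_{L^{r}(\nu_2)}^{r}$ with $r:=q/p\ge1$. Since $h_2(z)^{p}G(z)=\int_{\Omega}h_1(w)^{-p}|f(w)|^{p}\,h_2(z)^{p}K(z,w)^{\delta p}\,d\nu_1(w)$ is a $\nu_1$-superposition of functions of $z$, Minkowski's integral inequality in $L^{r}(\nu_2)$ applies (this is exactly where $r\ge1$ is used), and using $pr=q$ to evaluate the inner norm yields
$$\|h_2^{p}G\|_{L^{r}(\nu_2)}\le\int_{\Omega}h_1(w)^{-p}|f(w)|^{p}\Bigl(\int_{\Omega}h_2(z)^{q}K(z,w)^{\delta q}d\nu_2(z)\Bigr)^{p/q}d\nu_1(w).$$

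Finally I would apply hypothesis (ii): the inner $\nu_2$-integral is bounded by $C_2 h_1(w)^{q}$, so its $(p/q)$-th power equals $C_2^{p/q}h_1(w)^{p}$, cancelling the weight $h_1(w)^{-p}$ and leaving $\|h_2^{p}G\|_{L^{r}(\nu_2)}\le C_2^{p/q}\|f\|_{L^p(\nu_1)}^{p}$. Substituting back gives $\|Tf\|_{L^q(\nu_2)}^q\le C_1^{q/p'}C_2\|f\|_{L^p(\nu_1)}^q$, and taking $q$-th roots produces exactly the asserted norm bound $C_1^{1/p'}C_2^{1/q}$. I expect the only genuine obstacle to be the exponent bookkeeping at the Minkowski step: for the powers to balance, the second hypothesis must be read with $K(z,w)^{\delta q}$ and constant $C_2$ in place of the $K(z,w)^{\delta p'}$ and $C_1$ as printed, which look like typographical slips; with that reading every exponent lines up and the operator norm emerges precisely as stated.
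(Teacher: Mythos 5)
Your proof is correct, but there is nothing in the paper to compare it against: the statement is imported from the cited reference \cite{RZ} (Zhao's generalized Schur test) and the paper gives no proof of it. Your argument --- splitting $K=K^{\gamma}K^{\delta}$, applying H\"older's inequality with the conjugate pair $(p',p)$ against the weight $h_1$, and then using Minkowski's integral inequality in $L^{q/p}(\nu_2)$ to handle the off-diagonal range $q/p\geq 1$ --- is in fact exactly Zhao's original proof of \cite[Theorem 1]{RZ}, so there is no methodological divergence to report. You are also right that the second displayed hypothesis, as printed in the paper, contains two slips inherited from careless transcription: the exponent must be $\delta q$ rather than $\delta p'$, and the constant must be $C_2$ rather than $C_1$. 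One quick way to see that the printed version cannot be correct is scale-invariance: replacing $K$ by $\lambda K$ multiplies $\|T\|$ by $\lambda=\lambda^{\gamma+\delta}$, while the printed hypotheses would only rescale the claimed bound $C_1^{1/p'}C_2^{1/q}$ by $\lambda^{\gamma+\delta p'/q}$, which matches for all admissible $\gamma,\delta$ only when $p'=q$. With the corrected reading, your exponent bookkeeping (in particular the identities $pr=q$ and $1/r=p/q$ at the Minkowski step) is exactly right and yields the stated norm bound.
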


\begin{thm}\cite[Theorem 2]{RZ}\label{Schur2}
	Let $\nu_1$ and $\nu_2$ be postive measures on the space $\Omega$ and let $K(z,w)$ be a non-negative measurable function on $\Omega\times \Omega$. Let $T$ be the integral operator with kernel $K$, defined as follows.
	$$Tf(z)=\int_{\Omega}f(w)K(z,w)d\nu_1(w)$$ 
	Suppose $1=p\leq q<\infty$ and suppose there exist $\gamma$ and $\delta$ such that $\gamma+\delta=1$. If there exist positive functions $h_1$ and $h_2$ with positive constants $C_1$ and $C_2$ such that
	$$\textsl{ess}\sup_{w\in\Omega}
	h_1(w)K(z,w)^{\gamma}d\nu_1(w)\leq C_1h_2(z)$$ 
	for almost all $z\in\Omega$, and
	$$\int_{\Omega}h_2(z)^{q}K(z,w)^{\delta q}d\nu_2(z)\leq C_1h_1(w)^{q}$$ 
	for almost all $w\in\Omega$, then $T$ is bounded from $L^1(\Omega,\nu_1)$ into $L^q(\Omega,\nu_2)$ and the norm of the operator does not exceed $C_1C_2^{1/q}$.
	
\end{thm}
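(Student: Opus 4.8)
The plan is to prove this by the standard kernel-factorization argument, the only genuine analytic ingredient being Minkowski's integral inequality, which is available precisely because $q\geq 1$. Since $K\geq 0$ we have $|Tf|\leq T|f|$ and $\||f|\|_{L^1(\nu_1)}=\|f\|_{L^1(\nu_1)}$, so it suffices to treat $f\geq 0$. First I would split the kernel using the exponents $\gamma$ and $\delta$, writing $K(z,w)=K(z,w)^{\gamma}K(z,w)^{\delta}$, and insert the factor $h_1(w)\,h_1(w)^{-1}$, so that
\begin{equation*}
Tf(z)=\int_{\Omega} f(w)\,h_1(w)^{-1}\,\bigl[h_1(w)K(z,w)^{\gamma}\bigr]\,K(z,w)^{\delta}\,d\nu_1(w).
\end{equation*}
Next I would invoke the first hypothesis, which (reading the essential supremum over $w$, the stray $d\nu_1(w)$ being a typo) says that $h_1(w)K(z,w)^{\gamma}\leq C_1 h_2(z)$ for a.e. $w$ and a.e. $z$; this lets me pull the bracketed factor out of the integral to get
\begin{equation*}
Tf(z)\leq C_1\,h_2(z)\int_{\Omega} f(w)\,h_1(w)^{-1}K(z,w)^{\delta}\,d\nu_1(w).
\end{equation*}

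The key step is then to take the $L^q(\nu_2)$ norm in $z$ and apply Minkowski's integral inequality to move the norm inside the $w$-integral, viewing the inner integral as a superposition of the functions $z\mapsto h_2(z)K(z,w)^{\delta}$ weighted by $f(w)h_1(w)^{-1}\geq 0$:
\begin{equation*}
\|Tf\|_{L^q(\nu_2)}\leq C_1\int_{\Omega} f(w)\,h_1(w)^{-1}\Bigl(\int_{\Omega} h_2(z)^{q}K(z,w)^{\delta q}\,d\nu_2(z)\Bigr)^{1/q}d\nu_1(w).
\end{equation*}
This is the one place where the assumption $q\geq 1$ is used, and it is the main (if mild) obstacle in the sense that the whole argument hinges on the validity of the continuous triangle inequality in $L^q$; for $q<1$ it would fail.

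Finally I would apply the second hypothesis, which bounds the inner $L^q$ norm by $C_2^{1/q}h_1(w)$ (reading the constant on its right-hand side as $C_2$ rather than $C_1$, consistent with the asserted operator-norm bound $C_1C_2^{1/q}$). The factors $h_1(w)^{-1}$ and $h_1(w)$ then cancel, leaving
\begin{equation*}
\|Tf\|_{L^q(\nu_2)}\leq C_1 C_2^{1/q}\int_{\Omega} f(w)\,d\nu_1(w)=C_1 C_2^{1/q}\,\|f\|_{L^1(\nu_1)},
\end{equation*}
which is exactly the claimed estimate. Undoing the reduction to $f\geq 0$ via $|Tf|\leq T|f|$ completes the proof, with operator norm at most $C_1C_2^{1/q}$.
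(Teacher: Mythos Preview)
Your argument is correct and is exactly the standard proof of this $L^1\to L^q$ Schur-type test: factor $K=K^{\gamma}K^{\delta}$, absorb $K^{\gamma}$ via the essential-supremum hypothesis, apply Minkowski's integral inequality in $L^q(\nu_2)$, and then use the second hypothesis to cancel $h_1$. You also correctly identified and handled the two typos in the statement (the stray $d\nu_1(w)$ inside the essential supremum and the constant $C_1$ that should read $C_2$ in the second inequality).

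Note that the paper does not supply its own proof of this theorem; it is quoted verbatim from \cite[Theorem 2]{RZ} and used as a black box. So there is no ``paper's proof'' to compare against beyond the original reference, whose argument is essentially the one you wrote.
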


\section{Sufficient conditions for $L^p-L^q$ estimates of $S_\M$}
In this section the main ingredient is the generalize Schur's test. We are begining by the following lemma.

\begin{lemma}\label{lemm1}
	Let $b_1$, $b_2$, and $c$ be real numbers. Let $1<p\leq q<\infty$, $\max(-1,-1-qb_1)<r<\infty$ and $-1<s<\infty$. 
	If
	\begin{equation}\label{condition1}
		\left\lbrace
		\begin{array}{ll}
		s+1<p(b_2+1)\\
		c\leq b_1+b_2-s+\frac{n+1+r}{q}+\frac{n+1+s}{p'}
		\end{array}
		\right.
	\end{equation}
		then $S_{\M}$ is bounded from $L_s^p(\M)$ to $L_r^q(\M)$.	
	
\end{lemma}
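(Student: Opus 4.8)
The plan is to deduce the boundedness of $S_{\M}$ from that of the positive operator $T_{\M}$. Since $|S_{\M}f(z)|\le T_{\M}|f|(z)$ pointwise (because $(1-|z|^2)^{b_1}>0$ on $\M$ and $|(1-z\bullet\bar w)^{c}|=|1-z\bullet\bar w|^{c}$) and $\|\,|f|\,\|_{L^p_s(\M)}=\|f\|_{L^p_s(\M)}$, it suffices to prove that $T_{\M}$ is bounded from $L^p_s(\M)$ to $L^q_r(\M)$; the gain is that $T_{\M}$ has a non-negative kernel, so the generalized Schur test of Theorem \ref{Schur1} applies directly. I would write $T_{\M}f(z)=\int_{\M}f(w)K(z,w)\,d\nu_1(w)$ with
$$K(z,w)=(1-|z|^2)^{b_1}\frac{(1-|w|^2)^{b_2-s}}{|1-z\bullet\bar w|^{c}},\qquad d\nu_1(w)=(1-|w|^2)^s\a(w)\wedge\overline{\a(w)},$$
and $d\nu_2(z)=(1-|z|^2)^r\a(z)\wedge\overline{\a(z)}$, placing us in the setting of Theorem \ref{Schur1} on $\Omega=\M$.

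For the test functions I would take powers of the defining weight, $h_1(w)=(1-|w|^2)^{-a}$ and $h_2(z)=(1-|z|^2)^{-b}$, together with a splitting $\gamma+\delta=1$, $\gamma,\delta>0$, leaving $a$, $b$, $\gamma$ as free parameters to be fixed at the end. With these choices both Schur integrals are of Forelli--Rudin type. After pulling out the $z$-independent factors, the first condition requires the estimate of
$$\int_{\M}\frac{(1-|w|^2)^{\beta_1}}{|1-z\bullet\bar w|^{c\gamma p'}}\a(w)\wedge\overline{\a(w)},\qquad \beta_1=s-ap'+(b_2-s)\gamma p',$$
while the second controls the analogous integral in the variable $z$, with weight exponent $\beta_2=r-bq+b_1\delta q$ and denominator power $c\delta q$. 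Both integrals are evaluated by Lemma \ref{Jlemma}: each is bounded, logarithmic, or comparable to a negative power of $1-|z|^2$ (respectively $1-|w|^2$) according to the trichotomy there, provided the weight exponents satisfy $\beta_1>-1$ and $\beta_2>-1$ for the integrals to converge.

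The crux is to choose $a$, $b$, $\gamma$ so that both Schur inequalities hold simultaneously. Substituting the Forelli--Rudin outputs back into the two conditions of Theorem \ref{Schur1} and comparing the resulting powers of $1-|z|^2$ and $1-|w|^2$ produces two convergence constraints ($\beta_1,\beta_2>-1$) and two exponent-matching inequalities; the logarithmic and bounded regimes of Lemma \ref{Jlemma} are handled using the slack available when $c$ is strictly below its critical value. The main obstacle is to verify that this system is solvable \emph{precisely} under \eqref{condition1}: the integrability of the inner $w$-integral is what forces the strict inequality $s+1<p(b_2+1)$, whereas the requirement that the emerging power of $1-|z|^2$ be dominated by $h_2(z)^{p'}$ (and symmetrically in $w$) collapses, after eliminating $a$, $b$, $\gamma$, to the single bound $c\le b_1+b_2-s+\frac{n+1+r}{q}+\frac{n+1+s}{p'}$. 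Exhibiting an explicit admissible triple $(a,b,\gamma)$ in the allowed ranges under \eqref{condition1}, and checking that it keeps both integrals in a favorable regime, is the technical heart of the argument.
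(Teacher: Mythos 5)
Your setup coincides with the paper's own proof: dominate $S_\M$ by the positive operator $T_\M$, feed the resulting kernel into the generalized Schur test (Theorem \ref{Schur1}) with test functions that are negative powers of $1-|\cdot|^2$, and evaluate both Schur integrals by the Forelli--Rudin estimates of Lemma \ref{Jlemma}. The genuine gap is that you stop exactly where the proof begins. You never write down the system of constraints on your parameters $(a,b,\gamma)$, let alone exhibit an admissible triple; you defer this as ``the technical heart.'' But that step is the \emph{only} place where the hypotheses (\ref{condition1}) are used --- the strict inequality $s+1<p(b_2+1)$ and the upper bound on $c$ enter nowhere else in what you wrote --- so as it stands your argument is a scheme that would equally well ``prove'' the lemma for arbitrary parameters. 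Until the solvability of that system under precisely (\ref{condition1}) is verified, nothing is proved.

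To close the gap the way the paper does: first reduce to the critical exponent. Since $|1-z\bullet\bar w|\le 2$ on $\M\times\M$, the kernel with a subcritical exponent is pointwise dominated by a constant times the kernel with $c$ equal to the right-hand side of (\ref{condition1}); so one may assume $c=b_1+b_2-s+\tau$ with $\tau=\frac{n+1+s}{p'}+\frac{n+1+r}{q}$, as in (\ref{cchoose})--(\ref{tauchoose}). This eliminates the case analysis you propose to handle ``using the slack'': the bounded and logarithmic regimes of Lemma \ref{Jlemma} never arise. Indeed, choosing the splitting exponent $t=\frac{1}{\tau}\bigl(\frac{n+1+s}{p'}+v-u\bigr)$ (your $\gamma$), the criticality of $c$ forces the exponent-matching identities $b_1tp'-c_1=-p'v$ and $(b_2-s)(1-t)q-c_2=-qu$ of (\ref{c1choose})--(\ref{c2choose}) to hold automatically, so the two Schur conditions reduce to four open inequalities on $(u,v)$: $s_1>-1$, $s_2>-1$, $c_1>0$, $c_2>0$, i.e.\ (\ref{claim1}). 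These say that $(u,v)$ must lie in an explicit nonempty open region --- the content of (\ref{Jclaim1})--(\ref{Jclaim2}) --- and the nonemptiness of that region is exactly what (\ref{condition1}) guarantees. Writing out that region and one admissible pair $(u,v)$ is the short but indispensable piece your proposal omits.
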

\begin{proof}
To use generalize Schur's test we first consider the following tools.
$$h_1(z)=(1-|z|^2)^{-u},\,\, h_2(w)=(1-|w|^2)^{-v},$$ $$d\nu_1(z)=(1-|z|^2)^{s}\alpha(z)\wedge\overline{\alpha(z)},\,\, d\nu_2(w)=(1-|w|^2)^{r}\alpha(w)\wedge\overline{\alpha(w)}$$ and $$K(z,w)=\frac{(1-|z|^2)^{b_1}(1-|w|^2)^{b_2-s}}{|1-z\bullet\bar w|^{b_1+b_2+\frac{n+1+r}{q}-\frac{n+1+s}{p}}}.$$  
Second, observe that if $c\leq b_1+b_2+\frac{n+1+r}{q}-\frac{n+1+s}{p}$ then
$$|S_\M f(z)|\leq |T_\M f(z)|\leq 2^{b_1+b_2+\frac{n+1+r}{q}-\frac{n+1+s}{p}}|Tf(z)|.$$
Thus the boundedness of $S_\M$ arises from $T$ where
$$Tf(z)=\int_{\M}f(w)K(z,w)d\nu_1(w)$$
To do this we adopt the following notations.
\begin{equation}\label{cchoose}
c=b_1+b_2-s+\frac{n+1+s}{p'}+\frac{n+1+r}{q}
\end{equation}
\begin{equation}\label{tauchoose}
\tau=\frac{n+1+s}{p'}+\frac{n+1+r}{q}
\end{equation}
Let us choose 
\begin{equation}\label{tchoose}
t=\frac{\frac{n+1+s}{p'}+v-u}{\tau}
\end{equation}
where $u$ and $v$ will be determined. It is easy to see that 
\begin{equation}\label{1-tchoose}
1-t=\frac{\frac{n+1+r}{q}+u-v}{\tau}
\end{equation}
\begin{equation}\label{Jzchoose}
\int_{\M}h_1(w)^{p'}K(z,w)^{tp'}d\nu_1(w)=(1-|z|^2)^{b_1tp'}J_{c_1,s_1}
(z) 
\end{equation}
where $c_1=cp't+p'u-(b_2-s)tp'-n-s-1$ and $s_1=cp't+p'u-(b_2-s)tp'-n-s-1$,
\begin{equation}\label{Jwchoose}
\int_{\M}h_2(z)^{q}K(z,w)^{(1-t)q}d\nu_2(z)=(1-|w|^2)^{q(b_2-s)(1-t)}J_{c_2,s_2}
(w) 
\end{equation}
where $c_2=cq(1-t)+qv-b_1q(1-t)-n-r-1$ and $s_2=cq(1-t)+qv-b_1q(1-t)-n-r-1$. It is clear that from (\ref{cchoose}) and (\ref{tauchoose}) we get 
 \begin{equation}\label{ctauchoose}
 c-b_1-b_2+s=\tau.
 \end{equation}
 So, from (\ref{ctauchoose}) and (\ref{tauchoose}) we obtain that
  \begin{equation}\label{c1choose}
  b_1tp'-c_1=-p'v
 \end{equation}
 and
 \begin{equation}\label{c2choose}
 (b_2-s)(1-t)q-c_2=-qu
 \end{equation}
  Otherwise we claim that there exist two reals numbers $u$ and $v$ such that
  \begin{equation}\label{claim1}
  	s_1>-1,\,\,s_2>0,\,\,c_1>0,\,\,c_2>0.
  \end{equation}
  So, from Lemma \ref{Jlemma} combined with (\ref{Jzchoose}) and (\ref{Jwchoose}) we obtain that.
          \begin{equation}\label{Jchoose}
     \left\lbrace
     \begin{array}{ll}
     \int_{\M}h_1(w)^{p'}K(z,w)^{tp'}d\nu_1(w)\leq C_1(1-|z|^2)^{-p'v}\\
     \int_{\M}h_2(z)^{q}K(z,w)^{(1-t)q}d\nu_2(z)\leq C_2(1-|w|^2)^{-qu}
     \end{array}
     \right.
     \end{equation}
  We acheive the lemma's proof by invoking Theorem \ref{Schur1}. Now we are going to prove (\ref{claim1}). First, it is easy to see that (\ref{condition1}) yields the following.
       \begin{equation}\label{Jclaim1}
  \left\lbrace
  \begin{array}{ll}
  -\frac{(b_2-s)(n+1+r)}{q}<\frac{(1+s)\tau}{p'}+\frac{(b_2-s)(n+1+s)}{p'}\\
  -\frac{b_1(n+1+s)}{q}<\frac{b_1(n+1+r)}{q}+\frac{\tau(1+r)}{q}
  \end{array}
  \right.
  \end{equation} 
  Second, we choose $u$ and $v$ such that
  \begin{equation}\label{Jclaim2}
  \left\lbrace
  \begin{array}{ll}
  -\frac{(b_2-s)(n+1+r)}{q}<\tau u+(b_2-s)(u-v)<\frac{(1+s)\tau}{p'}+\frac{(b_2-s)(n+1+s)}{p'}\\
  -\frac{b_1(n+1+s)}{q}<\tau v+b_1(v-u)<\frac{b_1(n+1+r)}{q}+\frac{\tau(1+r)}{q}
  \end{array}
  \right.
  \end{equation}
  Finally, by combining (\ref{cchoose}), (\ref{tauchoose}), (\ref{tchoose}), (\ref{1-tchoose}) and (\ref{Jclaim2}) we prove easly (\ref{claim1}).
  
\end{proof}

\begin{lemma}\label{lemm2}
	Let $b_1$, $b_2$, and $c$ be real numbers. Let $1=p\leq q<\infty$, $\max(-1,-1-qb_1)<r<\infty$ and $-1<s<\infty$. 
	If
	\begin{equation}\label{condition2}
	\left\lbrace
	\begin{array}{ll}
	s+1<p(b_2+1)\\
	c\leq b_1+b_2-s+\frac{n+1+r}{q}+\frac{n+1+s}{p'}
	\end{array}
	\right.
	\end{equation}
		then $S_{\M}$ is bounded from $L_s^1(\M)$ to $L_r^q(\M)$.	
	
\end{lemma}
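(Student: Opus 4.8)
The plan is to mirror the structure of the proof of Lemma \ref{lemm1}, but to exploit the fact that at the endpoint $p=1$ (so that $p'=\infty$ and $\frac{n+1+s}{p'}=0$) the generalized Schur test of Theorem \ref{Schur2} is available with a genuinely simpler first hypothesis. First I would record that, since $\tfrac1{p'}=0$, the critical value of the second line of \eqref{condition2} is $c^{*}:=b_1+b_2-s+\frac{n+1+r}{q}$, and I would compare $S_\M$ and $T_\M$ with the positive integral operator $T$ whose kernel is
\[
K(z,w)=\frac{(1-|z|^2)^{b_1}(1-|w|^2)^{b_2-s}}{|1-z\bullet\bar w|^{c^{*}}},
\]
integrated against $d\nu_1(w)=(1-|w|^2)^s\,\a(w)\wedge\overline{\a(w)}$ and with target measure $d\nu_2(z)=(1-|z|^2)^r\,\a(z)\wedge\overline{\a(z)}$. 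Exactly as in Lemma \ref{lemm1}, the bound $|1-z\bullet\bar w|<2$ on $\M$ together with $c\le c^{*}$ gives $|S_\M f(z)|\le|T_\M f(z)|\le 2^{c^{*}-c}\,T(|f|)(z)$, so it suffices to prove that $T$ is bounded from $L_s^1(\M)$ into $L_r^q(\M)$.

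For this I would invoke Theorem \ref{Schur2} with the splitting $\gamma=0$, $\delta=1$ and the trivial test functions $h_1\equiv h_2\equiv 1$. This is the step where the endpoint helps: the first hypothesis of Theorem \ref{Schur2} degenerates to $\operatorname{ess\,sup}_{w}1\le C_1$, which holds with $C_1=1$, so the entire content of the argument is transferred to the second hypothesis. Thus the whole proof reduces to the uniform estimate $\int_\M K(z,w)^q\,d\nu_2(z)\le C_2$ for almost every $w\in\M$.

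To verify it I would factor the $w$-dependence out of the integral, writing
\[
\int_\M K(z,w)^q\,d\nu_2(z)=(1-|w|^2)^{q(b_2-s)}\int_\M\frac{(1-|z|^2)^{qb_1+r}}{|1-z\bullet\bar w|^{qc^{*}}}\,\a(z)\wedge\overline{\a(z)},
\]
and recognize the last integral as $J_{c_2,s_2}(w)$ from Lemma \ref{Jlemma} (with $d=0$), where $s_2=qb_1+r$ and $c_2=qc^{*}-n-s_2-1=q(b_2-s)$. The hypothesis $\max(-1,-1-qb_1)<r$ is exactly what guarantees $s_2>-1$, so Lemma \ref{Jlemma} applies; and the first line of \eqref{condition2}, which for $p=1$ reads $s<b_2$, is exactly what makes $c_2>0$. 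Hence Lemma \ref{Jlemma} yields $J_{c_2,s_2}(w)\eqsim(1-|w|^2)^{-c_2}=(1-|w|^2)^{-q(b_2-s)}$, the two powers of $1-|w|^2$ cancel, and $\int_\M K(z,w)^q\,d\nu_2(z)\eqsim 1$. Feeding $C_1=1$ and this $C_2$ into Theorem \ref{Schur2} gives the boundedness of $T$, hence of $S_\M$.

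The only genuinely delicate point I anticipate is bookkeeping at the endpoint rather than any new analytic difficulty: one must be careful that $\frac{n+1+s}{p'}$ really drops out for $p=1$ (so that $c^{*}$, and with it $c_2=q(b_2-s)$, is independent of the $p'$-slot), and that the conclusion $J_{c_2,s_2}(w)\eqsim(1-|w|^2)^{-c_2}$ of Lemma \ref{Jlemma}, which a priori only controls the boundary behaviour, is upgraded to an honest uniform bound on all of $\M$. The latter is routine: $w\mapsto(1-|w|^2)^{c_2}J_{c_2,s_2}(w)$ is continuous on the interior, where $|1-z\bullet\bar w|\ge 1-|w|>0$ keeps the integrand locally integrable, and it has a finite limit as $|w|\to1$, so it is bounded. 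With $\gamma=0$ chosen to annihilate the supremum hypothesis, no analogue of the two-sided parameter juggling \eqref{Jclaim1}--\eqref{Jclaim2} used in Lemma \ref{lemm1} is needed.
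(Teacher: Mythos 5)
Your proof is correct, and it is a genuine simplification of the paper's own argument, even though both rest on the same two ingredients: Theorem \ref{Schur2} and the Forelli--Rudin estimates of Lemma \ref{Jlemma}, after the same initial domination of $S_\M$ by the positive operator with the critical exponent $c^{*}=b_1+b_2-s+\frac{n+1+r}{q}$. The difference is in how Theorem \ref{Schur2} is fed. The paper keeps the full template of Lemma \ref{lemm1}: nontrivial test functions $h_1(w)=(1-|w|^2)^{-u}$, $h_2(z)=(1-|z|^2)^{-v}$ and a nontrivial splitting $t=(v-u)/\tau$ with $\tau=\frac{n+1+r}{q}$, subject to the side constraints $u<t(b_2-s)$ and $v>-tb_1$; the supremum hypothesis is then checked with the elementary bound $\max(1-|z|^2,1-|w|^2)\leq 2|1-z\bullet\bar w|$ (the exponent of $|1-z\bullet\bar w|$ collapses to zero by the choice of $t$), and the integral hypothesis is checked via Lemma \ref{Jlemma} as in (\ref{Jwchoose}). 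Your choice $\gamma=0$, $\delta=1$, $h_1=h_2\equiv 1$ trivializes the supremum hypothesis and reduces the whole lemma to the single uniform estimate $\int_\M K(z,w)^q\,d\nu_2(z)\le C_2$; this is really the classical criterion $\sup_{w}\|K(\cdot,w)\|_{L^q(\nu_2)}<\infty$ for $L^1\to L^q$ boundedness of a positive kernel (Minkowski's integral inequality), so your argument stands on its own even if one worries about degenerate choices in Theorem \ref{Schur2}. Your bookkeeping is right: $s_2=qb_1+r>-1$ is exactly the hypothesis $r>\max(-1,-1-qb_1)$, and $c_2=qc^{*}-n-s_2-1=q(b_2-s)>0$ is exactly the first line of (\ref{condition2}) at $p=1$, so Lemma \ref{Jlemma} applies and the powers of $1-|w|^2$ cancel. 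What your route buys is the complete elimination of parameter juggling: no $u$, $v$, $t$ need to be produced, and no consistency between the two Schur hypotheses needs checking (note the degenerate choice is not a special case of the paper's, since its constraint $u<t(b_2-s)$ excludes $u=v=0$). What the paper's route buys is uniformity of exposition, since the same computations serve $1<p\le q$ and $p=1$. Finally, your point about upgrading $\eqsim$ (which by the paper's definition only controls the limit as $|w|\to 1$) to an honest uniform bound on all of $\M$ is well taken and your justification is adequate; the paper uses that upgrade silently in both Lemma \ref{lemm1} and Lemma \ref{lemm2}.
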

\begin{proof}
As in proof of the Lemma\ref{lemm1} we have that. $\tau=\frac{n+1+r}{q}$, $t=\frac{v-u}{\tau}$, $u<(b_2-s)t$ and $-tb_1<v$. From easy calculus we have
$$\max(\frac{1-|z|^2}{2},\frac{1-|w|^2}{2})\leq|1-z\bullet\bar w|$$. This yields the following.
\begin{align*}
	\sup_{w\in\M}h_1(w)K(z,w)^t&\leq 2^{tb_1+t(b_2-s)+v-u}(1-|z|^2)^{-v}\sup_{w\in\M}|1-z\bullet\bar w|^{tb_1+t(b_2-s)+v-u-tc}\\
	&\leq 4^{tb_1+t(b_2-s)+v-u-tc/2}(1-|z|^2)^{-v}
\end{align*}
Otherwise, using the same method in Lemma\ref{lemm1} it is obvious to prove that.
$$\int_{\M}h_2(z)^{q}K(z,w)^{(1-t)q}d\nu_2(z)\leq C_2h_1(w)^{q}$$	
Finally, the lemma arises from Theorem \ref{Schur2}.

\end{proof}
\begin{lemma}\label{lemm2.2}
	Let $b_1$, $b_2$, and $c$ be real numbers. Let $1=p\leq q<\infty$, $\max(-1,-1-qb_1)<r<\infty$ and $-1<s<\infty$. 
	If
	\begin{equation}\label{condition2.2}
	\left\lbrace
	\begin{array}{ll}
	s=b_2\\
	c\leq b_1+\frac{n+1+r}{q}
	\end{array}
	\right.
	\end{equation}
	then $S_{\M}$ is bounded from $L_s^1(\M)$ to $L_r^q(\M)$.	
	
\end{lemma}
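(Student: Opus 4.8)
The plan is to obtain the bound directly from the generalized Schur test of Theorem \ref{Schur2}, exploiting the degeneracy $s=b_2$, rather than through the two-sided power-weight choice used in Lemma \ref{lemm1} and Lemma \ref{lemm2} (which, as their proofs show, needs $s<b_2$ strictly because otherwise the admissible range for the exponent $u$ collapses). First I would pass to a positive kernel: since $|(1-z\bullet\bar w)^{-c}|=|1-z\bullet\bar w|^{-c}$ we have $|S_\M f(z)|\leq|T_\M f(z)|$ pointwise, so it suffices to bound the positive operator $T_\M$. At $s=b_2$ the weight $(1-|w|^2)^{b_2-s}$ is identically $1$, so the kernel of $T_\M$ collapses to $\tilde K(z,w)=(1-|z|^2)^{b_1}|1-z\bullet\bar w|^{-c}$, and the whole task reduces to showing that $T_\M\colon L_s^1(\M)\to L_r^q(\M)$ is bounded.

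Next I would invoke Theorem \ref{Schur2} with the degenerate splitting $\gamma=0$, $\delta=1$ (so $\gamma+\delta=1$) and the constant comparison functions $h_1\equiv h_2\equiv 1$. With $\gamma=0$ the first, essential-supremum, hypothesis becomes the trivial inequality $1\leq C_1$, so the entire content sits in the second hypothesis, namely the \emph{uniform} bound $\int_\M \tilde K(z,w)^q\,d\nu_2(z)\leq C_1$ for almost every $w$. Writing this out with $d\nu_2(z)=(1-|z|^2)^r\,\alpha(z)\wedge\overline{\alpha(z)}$ gives
\[
\int_\M \tilde K(z,w)^q\,d\nu_2(z)=\int_\M\frac{(1-|z|^2)^{qb_1+r}}{|1-z\bullet\bar w|^{qc}}\,\alpha(z)\wedge\overline{\alpha(z)},
\]
which is precisely an integral of the form $J_{c',s'}(w)$ of Lemma \ref{Jlemma} (with $d=0$ and the two variables interchanged), upon setting $s'=qb_1+r$ and $n+c'+s'+1=qc$, that is $c'=q\bigl(c-b_1-\frac{n+1+r}{q}\bigr)$.

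Then I would read off the conclusion from Lemma \ref{Jlemma}. The standing assumption $r>-1-qb_1$ guarantees $s'>-1$, so the behaviour of the integral is governed only by the sign of $c'$. The hypothesis $c\leq b_1+\frac{n+1+r}{q}$ forces $c'\leq0$; when the inequality is strict we obtain $c'<0$, and Lemma \ref{Jlemma} then gives that $J_{c',s'}(w)$ is bounded on $\M$. Hence the second Schur hypothesis holds with $C_1=\sup_w J_{c',s'}(w)<\infty$, Theorem \ref{Schur2} applies, and $T_\M$, and therefore $S_\M$, is bounded from $L_s^1(\M)$ into $L_r^q(\M)$.

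The main obstacle is exactly the endpoint $c=b_1+\frac{n+1+r}{q}$, i.e. $c'=0$. There Lemma \ref{Jlemma} yields only $J_{0,s'}(w)\eqsim\log\frac{1}{1-|w|^2}$, so the uniform bound in the second Schur hypothesis fails; moreover a test function $f$ concentrating near a boundary point makes $\|\tilde K(\cdot,w)\|_{L_r^q(\M)}\to\infty$, which by the $L^1\to L^q$ duality is the quantity controlling the norm. Thus this route cleanly establishes the lemma for $c<b_1+\frac{n+1+r}{q}$, while the borderline case genuinely needs extra input: either cancellation from the holomorphic factor $(1-z\bullet\bar w)^{-c}$ that the modulus bound $|S_\M f|\leq|T_\M f|$ throws away, or — in line with the strict inequality appearing for $s=b_2$ in Theorem B — the reading that the hypothesis of the lemma should be the strict inequality $c<b_1+\frac{n+1+r}{q}$.
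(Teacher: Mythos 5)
Your argument is correct where it applies, and it is a cleaner variant of the paper's proof rather than a reproduction of it. The paper also works with the collapsed kernel $K(z,w)=(1-|z|^2)^{b_1}|1-z\bullet\bar w|^{-c}$ and Theorem \ref{Schur2}, but with a nontrivial splitting: $h_1\equiv1$, $h_2(z)=(1-|z|^2)^{-v}$, $\gamma=t=v/(c-b_1)$, $\delta=1-t$; the essential-supremum condition is handled via $|1-z\bullet\bar w|\geq(1-|z|^2)/2$, and the integral condition via Lemma \ref{Jlemma} applied to $J_{c_3,s_3}(w)$ with $s_3=(1-t)qb_1-qv+r$ and $c_3=(1-t)q(c-b_1)+qv-(n+1+r)$. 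Your degenerate choice $\gamma=0$, $\delta=1$, $h_1=h_2\equiv1$ turns the Schur test into Minkowski's integral inequality, i.e.\ the criterion $\sup_{w}\|K(\cdot,w)\|_{L^q_r(\M)}<\infty$. This buys simplicity and generality: you need no auxiliary parameter $v$ (whose admissible window in the paper carries implicit side conditions, e.g.\ $c>0$ and $c>b_1$ for $t$ to make sense, and whose printed constraint appears to have one inequality reversed relative to what $s_3>-1$ requires), and your computation covers the cases $c\leq b_1$ uniformly, where the paper's $t$ is not even defined.

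The decisive quantity is the same in both proofs, and this confirms your closing diagnosis. Since $(1-t)(c-b_1)=c-b_1-v$, the paper's index $c_3$ collapses to $q(c-b_1)-(n+1+r)$, which is exactly your $c'$. Hence the paper's claimed inequality $c_3<0$ holds only when $c<b_1+\frac{n+1+r}{q}$ strictly: the paper's own proof breaks down at the endpoint $c=b_1+\frac{n+1+r}{q}$ in precisely the way you describe, since there $J_{0,s'}(w)\eqsim\log\frac{1}{1-|w|^2}$ is unbounded. So the ``$\leq$'' in (\ref{condition2.2}) should indeed be read as ``$<$''. That reading is consistent with, and required by, the rest of the paper: for $s=b_2$, assertion (iii) of Theorem B and Lemma \ref{lemm4}(iii) both impose the strict inequality, and only the strict case of this lemma is invoked in the proof of Theorem B, so no endpoint cancellation argument for $S_\M$ is needed anywhere (indeed, granting the equivalences of Theorem B, $S_\M$ is unbounded at the endpoint).
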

\begin{proof}
	Here, we consider $h_1(z)=1$, $h_2(z)=(1-|z|^2)^{-v}$, $K(z,w)=\frac{(1-|z|^2)^{b_1}}{|1-z\bullet\bar w|^{c}}$ and $t=\frac{v}{c-b_1}$ where $c>0$ and $\frac{1+r}{q}+b_1(1-t)<v<c-b_1$. Then
	\begin{align*}
		\sup_{w\in\M}h_1(w)K(z,w)^t&=\sup_{w\in\M}\frac{(1-|z|^2)^{tb_1}}{|1-z\bullet\bar w|^{v+tb_1}}\\
		&\leq 2^{ct}(1-|z|^2)^{-v}
	\end{align*}
	Otherwise, from Lemma\ref{Jlemma} we get that. 
	\begin{align*}
		\int_{\M}h_2(z)^{q}K(z,w)^{(1-t)q}d\nu_2(z)&=J_{c_3,s_3}
		(w)\\
		&\leq C_2	
	\end{align*}
	where $c_3=(1-t)qc-b_1(1-t)q+qv-n-r-1<0$ and $s_3=(1-t)b_1q-qv+r>-1$. 
\end{proof}

\section{Necessary conditions for $L^p-L^q$ estimates of $T_\M$}

\begin{lemma}\label{lemm3}
	Let $b_1$, $b_2$, and $c$ be real numbers. Let $1\leq p\leq q<\infty$, $\max(-1,-1-qb_1)<r<\infty$ and $-1<s<\infty$. 
	If $T_{\M}$ is bounded from $L_s^p(\M)$ to $L_r^q(\M)$ then 
	\begin{equation}\label{condition3}
	\left\lbrace
	\begin{array}{ll}
	s+1\leq p(b_2+1)\\
	c\leq b_1+b_2-s+\frac{n+1+r}{q}+\frac{n+1+s}{p'}
	\end{array}
	\right.
	\end{equation}
	and the strict inequality holds for $1< p\leq q<\infty$.
	\end{lemma}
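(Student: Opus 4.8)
The plan is to obtain both inequalities as necessary conditions by feeding $T_\M$ explicit families of test functions and exploiting that its kernel $(1-|z|^2)^{b_1}(1-|w|^2)^{b_2}|1-z\bullet\bar w|^{-c}$ is nonnegative; each of the two conditions will come from a different family. Throughout I use that boundedness from $L^p_s(\M)$ to $L^q_r(\M)$ forces $T_\M f$ to be finite almost everywhere whenever $f\in L^p_s$, so that a test function producing $T_\M f\equiv\infty$ immediately contradicts boundedness.

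First I would establish $s+1\leq p(b_2+1)$ with the radial family $f_\sigma(w)=(1-|w|^2)^{-\sigma}$. A direct application of (\ref{intMY}) shows $f_\sigma\in L^p_s(\M)$ precisely when $\sigma<\tfrac{s+1}{p}$, and that $\int_\M(1-|w|^2)^{a}\,\a(w)\wedge\overline{\a(w)}$ is finite if and only if $a>-1$. For a fixed interior $z$ the factor $|1-z\bullet\bar w|^{-c}$ is bounded below by a positive constant on all of $\M$ (since $|1-z\bullet\bar w|\geq 1-|z|>0$ and $|1-z\bullet\bar w|\leq 2$), so by positivity $T_\M f_\sigma(z)$ dominates a constant times $\int_\M(1-|w|^2)^{b_2-\sigma}\,\a\wedge\overline{\a}$, which diverges as soon as $b_2-\sigma\leq -1$. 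Hence finiteness forces $\sigma<b_2+1$ for every admissible $\sigma<\tfrac{s+1}{p}$, and letting $\sigma\uparrow\tfrac{s+1}{p}$ yields $s+1\leq p(b_2+1)$.

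For the strictness when $1<p\leq q$, I would refine this test at the endpoint $s+1=p(b_2+1)$, i.e. $b_2=\tfrac{s+1}{p}-1$, by inserting a logarithmic factor and taking $g_\beta(w)=(1-|w|^2)^{-(s+1)/p}\bigl(\log\tfrac{1}{1-|w|^2}\bigr)^{-\beta}$. One checks that $g_\beta\in L^p_s(\M)$ if and only if $\beta p>1$, while at the endpoint the inner weight collapses to $(1-|w|^2)^{-1}\bigl(\log\tfrac{1}{1-|w|^2}\bigr)^{-\beta}$, whose integral over $\M$ diverges whenever $\beta\leq 1$. Since $p>1$ the interval $\tfrac1p<\beta\leq 1$ is nonempty, so any $\beta$ in it gives $g_\beta\in L^p_s$ with $T_\M g_\beta\equiv\infty$, contradicting boundedness; thus equality is impossible and the inequality is strict for $p>1$, whereas for $p=1$ this interval collapses, consistent with the non-strict statement.

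The second inequality is the sharp scaling condition and requires test functions concentrated at the boundary. For $a\in\M$ with $|a|\to 1$ and a fixed $t>0$ I would take
\[
f_a(w)=\frac{(1-|a|^2)^t}{|1-a\bullet\bar w|^{(n+1+s)/p+t}},
\]
so that Lemma \ref{Jlemma} (with the exponent matched to $tp>0$) gives $\|f_a\|_{L^p_s(\M)}\eqsim 1$. To bound $\|T_\M f_a\|_{L^q_r}$ from below I restrict both the outer variable $z$ and the integration variable $w$ to a Carleson-type box $Q(a)=\{w\in\M:\ |1-a\bullet\bar w|\lesssim 1-|a|^2\}$, on which the three quantities $1-|w|^2$, $|1-z\bullet\bar w|$ and $|1-a\bullet\bar w|$ are all comparable to $1-|a|^2$. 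This produces the pointwise bound $T_\M f_a(z)\gtrsim(1-|a|^2)^{b_1+b_2+n+1-(n+1+s)/p-c}$ for $z\in Q(a)$, and integrating $|T_\M f_a|^q$ against $(1-|z|^2)^r\a\wedge\overline{\a}$ over $Q(a)$ (whose measure is $\eqsim(1-|a|^2)^{n+1+r}$) gives $\|T_\M f_a\|_{L^q_r}\gtrsim(1-|a|^2)^{b_1+b_2+n+1-(n+1+s)/p-c+(n+1+r)/q}$. Comparing with $\|f_a\|_{L^p_s}\eqsim 1$ and letting $|a|\to1$ forces the exponent to be nonnegative, which after the identity $n+1-\tfrac{n+1+s}{p}=\tfrac{n+1+s}{p'}-s$ is exactly $c\leq b_1+b_2-s+\tfrac{n+1+s}{p'}+\tfrac{n+1+r}{q}$. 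The main obstacle is precisely the geometric input here: since $\M$ is neither a ball nor homogeneous, the comparabilities on $Q(a)$ and the measure estimate $\int_{Q(a)}\a\wedge\overline{\a}\eqsim(1-|a|^2)^{n+1}$ do not follow from invariance, but the integration formula (\ref{intMY}) together with the fact that Lemma \ref{Jlemma} reproduces exactly the unit-ball exponents $n+c+s+1$ indicates that $\M$ carries the same boundary scaling as the ball of $\C^n$; I would extract the box estimates from (\ref{intMY}). A Forelli--Rudin two-kernel estimate would give an alternative lower bound, but the box argument is the more robust route and the comparabilities are the genuinely technical point.
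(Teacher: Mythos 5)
Your treatment of the first inequality, $s+1\leq p(b_2+1)$, is correct but follows a genuinely different route from the paper's. The paper argues by duality: it identifies $L^q_r(\M)^*$ with $L^{q'}_r(\M)$, applies the adjoint $T_\M^*$ to the radial functions $f_N(z)=(1-|z|^2)^N$, computes $T_\M^*f_N=C_N(1-|z|^2)^{b_2-s}$ explicitly, and reads off $s+p'(b_2-s)>-1$ (hence the strict inequality at once when $p>1$) from the requirement $T_\M^*f_N\in L^{p'}_s(\M)$, with $L^\infty_s(\M)$ replacing $L^{p'}_s(\M)$ when $p=1$. Your direct route --- feeding $(1-|w|^2)^{-\sigma}$ into $T_\M$ itself and adding the logarithmic refinement at the endpoint --- is more elementary, avoids computing the adjoint and the constant $C_N$, and handles strictness correctly. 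Note also that the paper's proof of Lemma \ref{lemm3} in fact establishes only this first inequality; the condition on $c$ is deferred to Lemma \ref{lemm4}.

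The gap is in your proof of the inequality on $c$, and it sits exactly where you flagged it. Two of the box ingredients are not available as stated. First, the claimed two-sided comparability on $Q(a)$ fails: for $z,w\in Q(a)$ one does get $|1-z\bullet\bar w|\lesssim 1-|a|^2$ (this half is inherited from the closed unit ball of $\C^{n+1}$, since $z\bullet\bar w$ is the standard Hermitian pairing and the Koranyi quasi-metric $|1-z\bullet\bar w|^{1/2}$ satisfies the triangle inequality there), but $|1-z\bullet\bar w|$ and $1-|z|^2$ can be far smaller than $1-|a|^2$ on $Q(a)$. The first defect is harmless only because $c\leq 0$ makes the desired inequality automatic --- given the first condition and $r>-1-qb_1$, the right-hand side satisfies $b_1+\frac{n+1+r}{q}>\frac{n}{q}$ and $b_2-s+\frac{n+1+s}{p'}\geq n(1-\frac1p)\geq 0$, so it is positive --- hence one may assume $c>0$, where only the upper bound on $|1-z\bullet\bar w|$ is needed; the second defect forces you to restrict $z$ to a sub-box on which $1-|z|^2\eqsim 1-|a|^2$. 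Second, and more seriously, the measure lower bound $\int_{Q(a)}(1-|z|^2)^r\a(z)\wedge\overline{\a(z)}\gtrsim(1-|a|^2)^{n+1+r}$ does not follow from (\ref{intMY}): that formula only converts integrals over $\M$ into integrals against $\mu$ on $\partial\M$, and what you actually need is a lower bound $\mu\{\xi\in\partial\M:\,|1-a\bullet\bar\xi|<\delta\}\gtrsim\delta^{n}$ for anisotropic balls, a genuine geometric fact about the non-homogeneous manifold $\partial\M$ that appears nowhere in the paper and is never proved in your proposal (it can be extracted from the two-sided estimates of Lemma \ref{Jlemma} by a dyadic decomposition with a large exponent, but that argument must be supplied). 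This is precisely the difficulty the paper's Lemma \ref{lemm4} is built to avoid: there the test function $f_\xi$ is proportional to the Bergman kernel $K_{b_2,\M}(\cdot,\xi)$, so its image under the operator is computed exactly by the reproducing property, and then both $\|f_\xi\|_{L^p_s}$ and the norm of the image are evaluated by the two-sided Forelli--Rudin estimates of Lemma \ref{Jlemma}; letting $|\xi|\to1$ forces $c_4\leq 0$, which is the inequality on $c$, with no box geometry at all. Either adopt that reproducing-kernel route or supply the $\mu$-ball lower bound; as written, your argument for the second inequality is incomplete.
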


\begin{proof}
	Suppose $1< q<\infty$. Then the dual space $L^q_r(\M)^*$ of $L^q_r(\M)$ can be indentified with $L^{q'}_r(\M)$ under the integral paring
$$<f,g>_r=\int_{\M}f(z)\overline{g(z)}d\nu_2(z),\,f\in L^q_r(\M),\,g\in L^{q'}_r(\M)$$ 
Moreover, by easy computation we have
$$T_\M^*g(z)=(1-|z|^2)^{b_2-s}\int_{\M}\frac{(1-|w|^2)^{b_1}}{(1-z\bullet\bar w)^{c}}g(w)d\nu_2(w)$$
Let be $N$ a real number such that
\begin{equation}\label{Nreal}
 N>\max(-\frac{1+r}{q'},-1-r-b_1). 
 \end{equation}
 Then  from (\ref{intMY}) we have
\begin{equation}\label{fN}
\int_{\M}|f_N(z)|^{q'}d\nu_2(z)=\frac{\omega(\partial\M)\Gamma(r+q'N+1)\Gamma(n-1)}{2\Gamma(r+q'N+n)}
\end{equation}

\begin{equation}\label{T*fN}
T_\M^*f_N(z)=C_N(1-|z|^2)^{b_2-s}=C_Nf_{b_2-s}(z)
\end{equation}

where $$f_N(z)=(1-|z|^2)^{N}$$
 and 
$$C_N=\frac{(n-1)!\omega(\partial\M)\Gamma(b_1+r+N+1)}{(n-1)(n-2)!2\Gamma(c)}\sum_{k=0}^{\infty}\frac{\Gamma(k+c)\Gamma(n-1+k/2)}{\Gamma(b_1+r+N+n+k/2)}$$
Because $T_\M^*f_N$ belongs to $L^{p'}_s(\M)$ then from (\ref{T*fN}) and (\ref{Nreal}) we have $s+p'(b_2-s)>-1$. This leads to $s+1<p(b_2+1)$. Now, if we suppose $1=p<q<\infty$ then $T_\M^*f_N$ belongs to $L^{\infty}_s(\M)$. This gives $b_2-s\geq 0$. Thus $s+1\leq 1(b_2+1)$. The case $1=p=q$ is easy to prove. This completes the proof of the lemma.
	\end{proof}

\begin{lemma}\label{lemm4}
	Let $b_1$, $b_2$, and $c$ be real numbers. Let $1\leq p\leq q<\infty$, $\max(-1,-1-qb_1)<r<\infty$ and $-1<s<\infty$. 
	Suppose $T_{\M}$ bounded from $L_s^p(\M)$ to $L_r^q(\M)$. Consider the following tree cases.
	\begin{enumerate}
		\item[(i)] $1< p\leq q<\infty$ and $s+1<p(b_2+1)$;
		\item[(ii)] $1=p\leq q<\infty$ and $s<b_2$;
		\item[(iii)] $1=p\leq q<\infty$ and $s=b_2$.
	\end{enumerate}
	If $(i)$ and $(ii)$ hold then
	\begin{equation}\label{condition4} 
	c\leq b_1+b_2-s+\frac{n+1+r}{q}+\frac{n+1+s}{p'};
	\end{equation}
	 if $(iii)$ hold then 
	 \begin{equation}\label{condition5}
	 c< b_1+b_2-s+\frac{n+1+r}{q}+\frac{n+1+s}{p'}.
	 \end{equation}
\end{lemma}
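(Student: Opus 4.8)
The plan is to prove the two necessary inequalities \eqref{condition4} and \eqref{condition5} by testing the operator $T_{\M}$ (or equivalently its adjoint) against an explicit one-parameter family of functions and analyzing the asymptotics as $|z|\to 1$. The natural test functions are the powers $f_N(z)=(1-|z|^2)^N$ already introduced in Lemma \ref{lemm3}, whose $L^q_r$- and $L^{p'}_s$-norms are controlled by the Beta-type integral \eqref{fN} and whose image under the adjoint is computed in \eqref{T*fN}. Since boundedness of $T_{\M}\colon L^p_s(\M)\to L^q_r(\M)$ is equivalent to boundedness of the adjoint $T_{\M}^*\colon L^{q'}_r(\M)\to L^{p'}_s(\M)$, I would phrase everything through the adjoint, exactly as in the previous lemma, so that the kernel estimates from Lemma \ref{Jlemma} apply directly.

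First I would fix $N$ subject to \eqref{Nreal} and record that $\|f_N\|_{L^{q'}_r}$ is finite and positive by \eqref{fN}. Next, rather than using the full asymptotic constant $C_N$, the sharper approach is to bound $T_{\M}^*f_N$ from below near the boundary: using the lower bound $|1-z\bullet\bar w|\gtrsim 1-|z|^2$ and restricting the integration in $w$ to a boundary region, one exhibits a genuine integral of Forelli--Rudin type. Concretely, I would estimate
\begin{equation}\label{plansetup}
|T_{\M}^*f_N(z)|\gtrsim (1-|z|^2)^{b_2-s}\,\widetilde{J}(z),
\end{equation}
where $\widetilde{J}(z)$ is an integral of the form treated in Lemma \ref{Jlemma} with exponent parameter governed by $c-b_1-(r+N)$. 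The key algebraic identity to set up is that the exponent in $|1-z\bullet\bar w|$ after absorbing $(1-|w|^2)^{b_1}f_N(w)d\nu_2(w)$ matches $n+(\text{something})$, so that Lemma \ref{Jlemma} produces a power $(1-|z|^2)^{-(c-b_1-b_2+s-\frac{n+1+r}{q})}$ — up to logarithmic corrections in the critical case — against which the requirement $T_{\M}^*f_N\in L^{p'}_s(\M)$ can be tested.

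The main obstacle, and the place where cases (i)--(ii) split from case (iii), is the borderline behavior of Lemma \ref{Jlemma} at exponent zero. In the generic situation the integrability of $(1-|z|^2)^{b_2-s}\widetilde{J}(z)$ in $L^{p'}_s$ forces the power of $(1-|z|^2)$ to exceed $-\frac{n+1+s}{p'}$, which after rearrangement yields precisely \eqref{condition4}; allowing equality is permissible because at the endpoint $c=b_1+b_2-s+\frac{n+1+r}{q}+\frac{n+1+s}{p'}$ the exponent is exactly integrable. In case (iii), however, $s=b_2$ makes the weight $(1-|z|^2)^{b_2-s}=1$ disappear, so the marginal logarithmic factor from the critical case $I_0\eqsim\log\frac{1}{1-|z|^2}$ of Lemma \ref{Jlemma} can no longer be absorbed; the $\log$ destroys integrability at the endpoint, which is exactly why equality is excluded and one obtains the strict inequality \eqref{condition5}. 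I expect the delicate step to be handling this logarithmic term carefully and verifying that the choice of $N$ from \eqref{Nreal} can be made so that all the auxiliary integrals converge while still driving the test to the sharp threshold.
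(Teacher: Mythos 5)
Your approach has a genuine gap: radial test functions cannot detect the sharp off-diagonal threshold, and the place where your sketch goes wrong is the integrability claim. The computation itself is exact (no lower bound or boundary cut-off is needed): since $f_N\ge 0$ and the kernel of $T_\M$ is positive, one has precisely $T_\M^*f_N(w)=(1-|w|^2)^{b_2-s}J_{\tilde c,\tilde s}(w)$ with $\tilde s=b_1+r+N$ and $\tilde c=c-(n+1)-b_1-r-N$, so Lemma \ref{Jlemma} gives $T_\M^*f_N(w)\eqsim(1-|w|^2)^{b_2-s-\tilde c}$ when $\tilde c>0$. But by (\ref{intMY}) a power $(1-|w|^2)^{\gamma}$ lies in $L^{p'}_s(\M)$ if and only if $\gamma p'+s>-1$, i.e. $\gamma>-\frac{1+s}{p'}$, \emph{not} $\gamma>-\frac{n+1+s}{p'}$ as you assert. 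With the correct threshold, the constraint produced by $f_N$ is $c<(n+1)+b_1+r+N+b_2-s+\frac{1+s}{p'}$, and letting $N$ decrease to its admissible infimum $-\frac{1+r}{q'}$ (imposed by (\ref{Nreal}) and (\ref{fN})) yields only
\[
c\;\le\; b_1+b_2-s+n+\frac{1+r}{q}+\frac{1+s}{p'}
\;=\;\Bigl(b_1+b_2-s+\frac{n+1+r}{q}+\frac{n+1+s}{p'}\Bigr)+n\Bigl(\frac{1}{p}-\frac{1}{q}\Bigr),
\]
which is strictly weaker than (\ref{condition4}) whenever $p<q$; the two agree only on the diagonal $p=q$. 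The loss of exactly $n(\frac1p-\frac1q)$ is structural, not a bookkeeping issue: averaging the kernel over the sphere gains $n$ powers of $1-|w|^2$ (by Lemma \ref{Jlemma}, $\int_{\partial\M}|1-w\bullet\bar\xi|^{-(n+c)}d\mu(\xi)\eqsim(1-|w|^2)^{-c}$, while the pointwise size is $(1-|w|^2)^{-(n+c)}$), so radial data are never extremal for the off-diagonal problem, and in the range where $c$ exceeds the right-hand side of (\ref{condition4}) by at most $n(\frac1p-\frac1q)$ the operator is unbounded even though every $T_\M^*f_N$ lies in $L^{p'}_s(\M)$. The strict inequality (\ref{condition5}) in case (iii) is also lost for $q>1$: each admissible $N$ gives a strict but $N$-dependent bound, and since the infimum over $N$ is not attained, the limit is non-strict.

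The paper's proof concentrates mass at a boundary point instead. For $\xi\in\M$ it takes $f_\xi$ to be the Bergman kernel $K_{b_2,\M}(\cdot,\xi)$ normalized by the factor $(1-|\xi|^2)^{\,n+1+b_2-\frac{n+1+s}{p}}$ (this is the normalization actually used in (\ref{xi}) and (\ref{Txi}), though the displayed definition of $f_\xi$ omits the $-\frac{n+1+s}{p}$ in the exponent). Hypotheses (i)/(ii) enter precisely to make $\|f_\xi\|_{L^p_s(\M)}$ bounded uniformly in $\xi$, via Lemma \ref{Jlemma}; the reproducing property of $A^2_{b_2}(\M)$ then computes $T_\M f_\xi$ in closed form, so that $\|T_\M f_\xi\|_{L^q_r}^q=(1-|\xi|^2)^{q(n+1+b_2)-q\frac{n+1+s}{p}}J_{c_4,s_4}(\xi)$ with $s_4=qb_1+r$. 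Boundedness of $T_\M$ keeps this quantity bounded as $|\xi|\to 1$, and Lemma \ref{Jlemma} forces $c_4\le 0$, which is exactly (\ref{condition4}): the sharp constant appears because $f_\xi$ is localized at $\xi$. In case (iii) the prefactor exponent vanishes ($p=1$, $s=b_2$), the bound becomes $J_{c_4,s_4}(\xi)\le C$, and since $J_{0,s_4}\eqsim\log\frac{1}{1-|\xi|^2}$ is unbounded, the endpoint $c_4=0$ is excluded, giving the strict inequality (\ref{condition5}). (Even here a point needs care which both your proposal and the paper gloss over: when $s=b_2$ the norms $\|f_\xi\|_{L^1_s}$ themselves grow like $\log\frac{1}{1-|\xi|^2}$ by Lemma \ref{Jlemma}, so obtaining (\ref{Txi}) with a uniform constant requires an additional normalization or refinement.) Your intuition that the borderline logarithm of Lemma \ref{Jlemma} is what produces the strictness in case (iii) is right in spirit, but it must be attached to this concentrated family, not to the radial one.
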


\begin{proof}
For any $\xi\in \M$ we denote
$$f_\xi(z)=\frac{ (1-|\xi|^2)^{n+1+b_2}[n-1+(n+1+2b_2)z\bullet\bar{\xi}]}{(1-z\bullet\bar\xi)^{n+1+b_2}}$$
Then $(i)$ leads to
\begin{equation}\label{xi}
\int_{\M}|f_\xi(z)|^{p}d\nu_1(z)\leq 2(n+1+b_2)(1-|\xi|^2)^{p(n+1+b_2)-n-1-s}J_{p(n+1+b_2)-n-1-s,s}(\xi)
\end{equation}
Otherwise, because $g(\xi)=\frac{1}{(1-\xi\bullet\bar z)^{c}}$ belongs to $A_s^2(\M)$ we have that
\begin{align*}
T_\M(f_\xi)(z)&=(1-|z|^2)^{b_1}(1-|\xi|^2)^{n+1+b_2-(n+1+s)/p}\overline{P_\M(g)(\xi)}\\
&=\frac{(1-|z|^2)^{b_1}(1-|\xi|^2)^{n+1+b_2-(n+1+s)/p} }{(1-z\bullet\bar\xi)^{c}}
\end{align*}
From the boundedness of $T_\M$ we have that
 \begin{equation}\label{Txi}
 (1-|\xi|^2)^{q(n+1+b_2)-(n+1+s)q/p}J_{c_4,s_4}(\xi)\leq C
\end{equation}
where $C>0$, $c_4=qc-qb_1-n-1-r-q(n+1+b_2)+(n+1+s)q/p$ and $s_4=qb_1+r$. So, from Lemma \ref{Jchoose} we have $c_4\leq 0$. This gives (\ref{condition4}). By the same way the case $(ii)$ leads to (\ref{condition4}). The case $(iii)$ leads to 
$(n+1+s)q/p-q(n+1+b_2)=s-b_2=0$. So, from (\ref{Txi}) combined with Lemma \ref{Jchoose} we abtain easly (\ref{condition5}). 
\end{proof}

\section{Proof of the Theorem A}
\begin{proof}
	The assertion (i) implies (iii) follows from Lemma \ref{lemm3}. It is obvious that (ii) implies (i). The assertion (iii) implies (ii) follows from Lemma \ref{lemm1}. This completes the proof of the theorem. 
\end{proof}

\section{Proof of the Theorem B}
\begin{proof}
	The assertion (i) implies (iii) follows from (ii) of Lemma \ref{lemm3} and (iii) of Lemma \ref{lemm4}. It is obvious that (ii) implies (i). The assertion (iii) implies (ii) follows from Lemma \ref{lemm1} and Lemma \ref{lemm2.2}. This achieves the proof of the theorem. 
\end{proof}
Before proving Theorem C we recall the key tool which will be use.
Let $f\,:\, \B_*\mapsto\C$ be a measurable function. We define a function $I_\M f$ on $\M$
by
$$(I_\M f )(z)=\frac{z_ {n+1} f\circ F (z)}{(2(n + 1) )^ {1/p}}=\frac{z_ {n+1} f (z_1 ,\cdots, z_n )}{(2(n + 1) )^ {1/p}}
$$
\begin{lemma}\cite[Lemma 4.1]{MY}\label{Isometry}
For each $p\geq 1$ and $\lambda>-1$ the linear operator $I_\M$ is an isometry from $L^p_\lambda(\B_*)$ into $L^p_\lambda(\M)$. Moreover, we have $P_{\lambda,\M}I_\M=I_\M P_{\lambda,\B_*}$ on $L^\lambda_s(\B_*)$.  
	\end{lemma}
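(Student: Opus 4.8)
The plan is to derive both assertions from an explicit description of the proper holomorphic map $F:\M\to\B_*$, $F(z)=(z_1,\dots,z_n)$, which realizes $\M$ as a $2$-to-$1$ branched cover of $\B_*$ with deck transformation $\sigma(z)=(z_1,\dots,z_n,-z_{n+1})$. First I would rewrite $\a$ in the coordinates $(z_1,\dots,z_n)$ on $\M$. Differentiating the defining relation $z_1^2+\cdots+z_{n+1}^2=0$ gives $dz_{n+1}=-z_{n+1}^{-1}\sum_{k=1}^{n}z_k\,dz_k$; substituting this into the stated formula for $\a$ makes every summand a multiple of $dz_1\wedge\cdots\wedge dz_n$, and a short sign bookkeeping collapses the whole sum to
$$\a(z)=\frac{(-1)^n(n+1)}{z_{n+1}}\,dz_1\wedge\cdots\wedge dz_n.$$
Hence $\a\wedge\overline{\a}$ equals a positive constant times $|z_{n+1}|^{-2}\,dz_1\wedge\cdots\wedge dz_n\wedge d\bar z_1\wedge\cdots\wedge d\bar z_n$, i.e.\ a constant multiple of $|z_{n+1}|^{-2}\,dv$ on each sheet of $F$.

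Next I would establish the isometry by change of variables. On $\M$ one has $z_{n+1}^2=-(z_1^2+\cdots+z_n^2)$, so with $w=F(z)$ the identities $|z_{n+1}|^2=|w\bullet w|$ and $|z|^2=|w|^2+|w\bullet w|=N_*^2(w)$ hold. Feeding these, together with the coordinate form of $\a\wedge\overline{\a}$ above and the factor $(2(n+1))^{-1}$ from $I_\M$, into $\|I_\M f\|_{L^p_\lambda(\M)}^p$, the Jacobian $|z_{n+1}|^{-2}$ combines with $|z_{n+1}|^p$ to leave $|z_{n+1}|^{p-2}=|w\bullet w|^{(p-2)/2}$, the weight $(1-|z|^2)^\lambda$ becomes $(1-N_*^2(w))^\lambda$, and summing over the two sheets of $F$ produces a factor $2$. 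The constants $2$, $(n+1)$ and the normalizations of $\a$ and $dv$ are arranged in \cite{MY} so that everything cancels, yielding
$$\|I_\M f\|_{L^p_\lambda(\M)}^p=\int_{\B_*}|f(w)|^p\,|w\bullet w|^{\frac{p-2}{2}}\,dv_\lambda(w)=\|f\|_{L^p_\lambda(\B_*)}^p;$$
the $p$-independence is forced precisely by the $(2(n+1))^{1/p}$ normalization together with $|z_{n+1}|^{p-2}=|w\bullet w|^{(p-2)/2}$.

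For the intertwining relation I would specialize to $p=2$ and use that $\sigma$ is a holomorphic, measure-preserving involution of $\M$ (it fixes $|z|$, and $\sigma^*\a=-\a$ gives $\sigma^*(\a\wedge\overline{\a})=\a\wedge\overline{\a}$). Thus $g\mapsto g\circ\sigma$ is a unitary involution of $L^2_\lambda(\M)$ preserving holomorphy, hence commuting with $P_{\lambda,\M}$ and splitting both $L^2_\lambda(\M)$ and $A^2_\lambda(\M)$ into even and odd parts. Since $z_{n+1}\circ\sigma=-z_{n+1}$ and $F\circ\sigma=F$, every $I_\M f$ lies in the odd subspace $L^2_{\lambda,\mathrm{odd}}(\M)$; conversely, dividing an odd (holomorphic) function by $z_{n+1}$ produces an even (holomorphic) function that descends through $F$ to $\B_*=\M/\sigma$, so $I_\M$ maps $L^2_\lambda(\B_*)$ \emph{onto} $L^2_{\lambda,\mathrm{odd}}(\M)$ and $A^2_\lambda(\B_*)$ onto the odd holomorphic subspace $A^2_{\lambda,\mathrm{odd}}(\M)=P_{\lambda,\M}\big(L^2_{\lambda,\mathrm{odd}}(\M)\big)$. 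Combined with the isometry, $I_\M$ is then a surjective isometry between these pairs of spaces, and the elementary fact that a unitary $U$ with $U(V_1)=V_2$ satisfies $UP_{V_1}=P_{V_2}U$ yields $P_{\lambda,\M}I_\M=I_\M P_{\lambda,\B_*}$ on $L^2_\lambda(\B_*)$.

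The main obstacle I anticipate is the first paragraph's bookkeeping: producing the clean coordinate expression for $\a$ and then tracking the exact constant (the multiplicity $2$, the Jacobian $|z_{n+1}|^{-2}$, the factor $2(n+1)$, and the normalizations of $\a$ and $dv$) so that the isometry holds with constant exactly $1$; and, for the second claim, justifying the division-by-$z_{n+1}$ descent, which rests on $z_{n+1}$ being a holomorphic defining function for the smooth hypersurface $\{w\bullet w=0\}\cap\M$ and on $\B_*$ being the holomorphic quotient $\M/\sigma$.
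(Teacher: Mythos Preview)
The paper does not supply its own proof of this lemma: it is quoted verbatim from \cite[Lemma~4.1]{MY}, so there is no argument here to compare against. Your proposal is a correct reconstruction of the natural proof and is essentially the approach one finds in \cite{MY}: compute $\alpha$ in the chart coming from the projection $F(z)=(z_1,\dots,z_n)$, obtain $\alpha\wedge\overline{\alpha}$ as a constant times $|z_{n+1}|^{-2}\,dv$ on each sheet, and then change variables using $|z_{n+1}|^2=|w\bullet w|$ and $|z|^2=N_*^2(w)$; the intertwining with the Bergman projections is handled exactly as you do, via the odd/even decomposition under the deck involution $\sigma$ and the fact that a unitary carrying one closed subspace onto another conjugates the corresponding orthogonal projections.

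Two small remarks. First, your sign/constant bookkeeping for $\alpha$ is right: substituting $dz_{n+1}=-z_{n+1}^{-1}\sum_k z_k\,dz_k$ makes each of the $n+1$ summands equal to $(-1)^n z_{n+1}^{-1}\,dz_1\wedge\cdots\wedge dz_n$, giving the factor $n+1$ you state; the remaining absolute constant is precisely what fixes the $(2(n+1))^{1/p}$ normalization in the definition of $I_\M$, and \cite{MY} indeed arranges this so that the isometry constant is exactly $1$. Second, the surjectivity of $I_\M$ onto $L^2_{\lambda,\mathrm{odd}}(\M)$ and onto $A^2_{\lambda,\mathrm{odd}}(\M)$ is the only place requiring care: for the holomorphic part you correctly note that $\{z_{n+1}=0\}\cap\M$ is a smooth hypersurface (the singular point of the cone $\{w\bullet w=0\}$ in $\C^n$ is the origin, which is excluded from $\H$), so division of an odd holomorphic function by $z_{n+1}$ is legitimate; for the measurable part the same change-of-variables identity, read backwards with Tonelli, shows that the even quotient descends to an $L^2_\lambda(\B_*)$ function.
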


\begin{prop}\label{propo}{\it
		Let $1\leq p\leq q <\infty$, $-1<\lambda,\tilde{\lambda}<\infty$. 
		\begin{enumerate}
			\item[(i)] For $1< p\leq q <\infty$ the Bergman projector $P_{s,\M}$ is bounded from $L^p_{\lambda}(\M)$ onto $A_{\tilde{\lambda}}^q(\M)$ if and only if  $
			\left\lbrace
			\begin{array}{ll}
			\lambda+1<p(s+1)\\
			s\geq \frac{n+1+\lambda}{p}- \frac{n+1+\tilde{\lambda}}{q}
			\end{array}
			\right.
			$ 
			\item[(ii)] For $1=p\leq q <\infty$ the Bergman projector $P_{s,\M}$ is bounded from $L_{\lambda}^1(\M)$ onto $A_{\tilde{\lambda}}^q(\M)$ if and only if 
			
			$
			\left\lbrace
			\begin{array}{ll}
			\lambda<s\\
			\frac{n+1+\tilde{\lambda}}{q}\geq n+1+\lambda
			\end{array}
			\right.
			$
			or
			$			
			\left\lbrace
			\begin{array}{ll}
			\lambda\leq s\\
			\frac{n+1+\tilde{\lambda}}{q}>n+1+\lambda
			\end{array}
			\right.
			$
			
		\end{enumerate}

	}
\end{prop}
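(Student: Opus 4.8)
The plan is to derive Proposition~\ref{propo} directly from Theorems A and B by specializing the general integral operators $S_\M$ and $T_\M$ to the weighted Bergman projector $P_{s,\M}$. First I would examine the explicit kernel
$$K_{s,\M}(z,w)=\frac{C\left(n-1+(n+1+2s)z\bullet\bar w\right)}{(1-z\bullet\bar w)^{n+1+s}}$$
and observe that the numerator is bounded, so that $|P_{s,\M}f(z)|$ is dominated by a constant multiple of $T_\M f(z)$ with the specific parameter choice $b_1=0$, $b_2=s$, and $c=n+1+s$. This reduces the boundedness of $P_{s,\M}$ from $L^p_\lambda(\M)$ into $L^q_{\tilde\lambda}(\M)$ to the boundedness of $T_\M$ with these parameters, mapping $L^p_s(\M)$ (with weight $\lambda$ in the role of $s$ from Theorem A, and $\tilde\lambda$ in the role of $r$) into $L^q_{\tilde\lambda}(\M)$.

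Next I would substitute $b_1=0$, $b_2=s$, $c=n+1+s$ into condition (iii) of Theorem A. The first inequality $s+1<p(b_2+1)$ becomes $\lambda+1<p(s+1)$, matching the stated condition exactly (after renaming the weight on the source space). The second inequality
$$c\leq b_1+b_2-\lambda+\frac{n+1+\tilde\lambda}{q}+\frac{n+1+\lambda}{p'}$$
becomes $n+1+s\leq s-\lambda+\frac{n+1+\tilde\lambda}{q}+\frac{n+1+\lambda}{p'}$; using $\frac{1}{p'}=1-\frac{1}{p}$ and simplifying, this should collapse to $s\geq\frac{n+1+\lambda}{p}-\frac{n+1+\tilde\lambda}{q}$. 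This is the heart of the computation and the step I expect to require the most care: I must verify that the algebraic rearrangement of the $n+1$ terms against the conjugate-exponent factor $(n+1+\lambda)/p'$ reproduces the clean statement in the proposition, and confirm no hypothesis constraints are lost. For part (ii) with $p=1$, the identical substitution into condition (iii) of Theorem B (where $p'=\infty$ removes the last term) yields the two alternative systems $\lambda<s$, $\frac{n+1+\tilde\lambda}{q}\geq n+1+\lambda$ and $\lambda\leq s$, $\frac{n+1+\tilde\lambda}{q}>n+1+\lambda$ directly.

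For the reverse direction and the passage from $T_\M$-boundedness back to genuine $P_{s,\M}$-boundedness, I would argue that the sufficiency of the parameter conditions already gives boundedness of $T_\M$, hence of the majorant, hence of $P_{s,\M}$ into $L^q_{\tilde\lambda}(\M)$; since $P_{s,\M}f$ is holomorphic, its image lands in $A^q_{\tilde\lambda}(\M)$. For necessity I would exploit that $P_{s,\M}$ is a projection: testing on the reproducing kernels $K_{s,\M}(\cdot,\xi)$, as in the proof of Lemma~\ref{lemm4}, shows that boundedness of $P_{s,\M}$ forces the same growth estimates that necessitate condition (iii), so one cannot have boundedness of the projector without the parameter inequalities. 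The main obstacle I anticipate is justifying that domination by $T_\M$ is \emph{tight} enough that the projector inherits both the sufficiency and the necessity — in particular, that the lower-order numerator term $(n+1+2s)z\bullet\bar w$ does not spoil the sharp threshold in the necessity direction; this is precisely where the test-function computation with the reproducing kernel, rather than the crude majorization, must be invoked.
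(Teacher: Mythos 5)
Your route is the same one the paper takes: specialize Theorems A and B to the projector, majorizing $|P_{s,\M}f|\leq C\,T_{\M}|f|$ (the numerator $n-1+(n+1+2s)z\bullet\bar w$ is bounded) with $b_1=0$, $b_2=s$, $c=n+1+s$, and then read off condition (iii); the paper's printed proof does exactly this, except that it writes $c=n+1+\lambda$. The trouble is that the step you yourself flagged as the delicate one fails. Substituting $b_1=0$, $b_2=s$, $c=n+1+s$ into (iii) of Theorem A, with source weight $\lambda$ and target weight $\tilde{\lambda}$, gives
\begin{equation*}
n+1+s\leq s-\lambda+\frac{n+1+\tilde{\lambda}}{q}+\frac{n+1+\lambda}{p'},
\end{equation*}
and $s$ cancels from the two sides; since $(n+1+\lambda)-\frac{n+1+\lambda}{p'}=\frac{n+1+\lambda}{p}$, the inequality is equivalent to
\begin{equation*}
\frac{n+1+\lambda}{p}\leq\frac{n+1+\tilde{\lambda}}{q},
\end{equation*}
which contains no $s$ at all. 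It does not collapse to $s\geq\frac{n+1+\lambda}{p}-\frac{n+1+\tilde{\lambda}}{q}$: that would require an extra $+s$ on the right which is simply not there, the point being that $c-b_1-b_2=n+1$ is independent of $s$. Nor can the mismatch be blamed on your choice of $c$: with the paper's own choice $c=n+1+\lambda$ (which is not the exponent of $K_{s,\M}$ unless $\lambda=s$) the same algebra gives $s\geq\lambda+\frac{n+1+\lambda}{p}-\frac{n+1+\tilde{\lambda}}{q}$, again not the printed condition. So your argument, carried out correctly, characterizes boundedness of $T_\M$ with the projector's parameters by $\lambda+1<p(s+1)$ together with $\frac{n+1+\lambda}{p}\leq\frac{n+1+\tilde{\lambda}}{q}$, and it cannot produce part (i) as stated; the difference is genuine, not cosmetic, since the two conditions disagree whenever $0<\frac{n+1+\lambda}{p}-\frac{n+1+\tilde{\lambda}}{q}\leq s$.

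Two things in your proposal do stand. Part (ii) really does follow from your substitution: for $p=1$ Theorem B yields the union of the systems ($\lambda<s$ and $\frac{n+1+\tilde{\lambda}}{q}=n+1+\lambda$) and ($\lambda\leq s$ and $\frac{n+1+\tilde{\lambda}}{q}>n+1+\lambda$), which is the same set as the union printed in the Proposition; note this is precisely the $p=1$ instance of the $s$-free condition above, which corroborates that the anomaly sits in part (i). Also, your insistence that necessity cannot be inherited from the majorant $T_\M$ but must be tested on $P_{s,\M}$ itself is correct and is a point the paper's one-line proof skips entirely; running that test on functions the projector reproduces, say $f_\xi(z)=(1-|\xi|^2)^{M}(1-z\bullet\bar{\xi})^{-M-(n+1+\lambda)/p}$ with $M$ large, whose $L^q_{\tilde{\lambda}}$-norms are comparable to $(1-|\xi|^2)^{\frac{n+1+\tilde{\lambda}}{q}-\frac{n+1+\lambda}{p}}$ by Lemma \ref{Jlemma}, again forces $\frac{n+1+\lambda}{p}\leq\frac{n+1+\tilde{\lambda}}{q}$ independently of $s$. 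In short, the defect you have run into originates in the Proposition and its printed proof, but as written your proposal asserts a simplification that is false, and that is the decisive gap: it does not prove statement (i).
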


\begin{proof}
	Let us choose in Theorem A and B $c=n+1+\lambda$, $b_2=s$ and $b_1=0$. Then it follows from Theorem A that $P_{s,\M}$ is bounded from $L^p_\lambda(\M)$ onto $A^q_{\tilde{\lambda}}(\M)$ iff (iii) of Theorem A holds.  Otherwise, from Theorem B it follows that $P_{\lambda,\M}$ is bounded from $L^1_\lambda(\M)$ onto $A^q_{\tilde{\lambda}}(\M)$ iff (iii) of Theorem B holds. This achieves the proof of the proposition.	
\end{proof}

\begin{rem}
	 The assertion (i) of Proposition \ref{propo} improves Theorem 5.2 of \cite{MY}. Indeed, it suffices to take $\lambda=\tilde{\lambda}$ and $p=q\geq 1$.
\end{rem}

\section{Proof of the Theorem C}

  \begin{proof}
  	  	The equivalence of (i) and (iii) of Theorem A follows from Lemma \ref{Isometry}. Also, the equivalence of (ii) and (iii) of Theorem B follows from Lemma \ref{Isometry}. This completes the proof of the theorem.
  	
\end{proof}
 
\begin{rem}
	The assertion (i) of Theorem C improves an important result due to G. Mengotti and E. H. Youssfi \cite{MY}. Indeed, for  $\lambda=\tilde{\lambda}$ and $p=q\geq 1$ we obtain the Theorem B of \cite{MY}.
\end{rem}

	\bibliographystyle{plain}
	
\end{document}